\documentclass[reqno]{amsart}

\usepackage{amsmath, amssymb, amsthm, nicefrac, enumerate}
\usepackage{hyperref}
\usepackage{cleveref}
\usepackage{xcolor}
\usepackage{nicematrix}
\usepackage{faktor}

\makeatletter
\DeclareRobustCommand*{\mfaktor}[3][]
{
   { \mathpalette{\mfaktor@impl@}{{#1}{#2}{#3}} }
}
\newcommand*{\mfaktor@impl@}[2]{\mfaktor@impl#1#2}
\newcommand*{\titlemfaktor@impl}[4]{
   \settoheight{\faktor@zaehlerhoehe}{\ensuremath{#1#2{#3}}}%
   \settoheight{\faktor@nennerhoehe}{\ensuremath{#1#2{#4}}}%
      \raisebox{-0.5\faktor@zaehlerhoehe}{\ensuremath{#1#2{#3}}}%
      \mkern-4mu\diagdown\mkern-5mu%
      \raisebox{0.5\faktor@nennerhoehe}{\ensuremath{#1#2{#4}}}%
}
\makeatother

\newtheorem{prop}{Proposition}
\newtheorem{thm}[prop]{Theorem}
\newtheorem{lemma}[prop]{Lemma}

\newtheorem{cor}[prop]{Corollary}

\newtheorem*{thm*}{Theorem}
\newtheorem*{alg*}{Algorithm}
\newtheorem*{lemma*}{Lemma}

\theoremstyle{definition}

\theoremstyle{remark}
\newtheorem{rmk}[prop]{Remark}
\newtheorem*{rmk*}{Remark}

\newtheorem*{notation*}{Notation}

\theoremstyle{definition}

\newtheorem{defn}[prop]{Definition}
\numberwithin{prop}{section} 
\numberwithin{equation}{section}

\newcommand{\mybf}{\mathbb}

\newcommand{\bR}{\mybf{R}}

\newcommand{\cL}{\mathcal{L}}

\newcommand{\cX}{\mathcal{X}}
\newcommand{\fs}{\mathfrak{s}}
\newcommand{\fh}{\mathfrak{h}}
\newcommand{\fg}{\mathfrak{g}}
\newcommand{\fn}{\mathfrak{n}}
\newcommand{\fz}{\mathfrak{z}}
\newcommand{\fa}{\mathfrak{a}}

\newcommand{\cZ}{\mathcal{Z}}
\newcommand{\cY}{\mathcal{Y}}
\newcommand{\cW}{\mathcal{W}}

\newcommand{\diag}[1]{\operatorname{diag}#1}
\newcommand{\Scal}[1]{\operatorname{Scal}#1}
\newcommand{\Aut}[1]{\operatorname{Aut}#1}
\newcommand{\ad}[1]{\operatorname{ad}#1}
\newcommand{\tr}[1]{\operatorname{tr}#1}
\newcommand{\abs}[1]{\left\lvert#1\right\rvert}

\newcommand{\Ric}[1]{\operatorname{Ric}#1}
\newcommand{\mRic}[1]{\operatorname{Ric^m_X}#1}

\newcommand{\Lxg}[1]{\operatorname{\mathcal{L}_X g}#1}

\title[On Nilpotent and Solvable Quasi-Einstein Manifolds ]{On Nilpotent and Solvable Quasi-Einstein Manifolds}
 \author[Nazia Valiyakath]{Nazia Valiyakath}
 \address{Department of Mathematics\\ Syracuse University, Syracuse, NY 13244}
 \email{naziavaliyakath@gmail.com}
\date{\today}

\sloppy
\begin{document}

\begin{abstract}
In this paper, we investigate nilpotent and unimodular solvable Lie groups that admit quasi-Einstein metrics $(M,g,X)$ with $X$ a left-invariant vector field, which we call \emph{totally left-invariant quasi-Einstein metrics}. We give a complete classification of nilpotent Lie groups admitting such metrics, proving that this occurs \emph{if and only if the group is isomorphic to a Heisenberg Lie group}. For unimodular solvable Lie groups $S$, we show that the existence of a non-flat totally left-invariant quasi-Einstein metric forces the center of $S$ to be one-dimensional. Furthermore, under the additional assumption that the adjoint action $\operatorname{ad}_a$ of $S$ is a normal derivation, we obtain a full classification: these groups are standard and their nilradical must be a Heisenberg Lie algebra. As an application, we prove that the only near-horizon geometries on a compact nilmanifold are $\Gamma \backslash H_{n}$, where $ H_{n}$ is $n$-dimensional Heisenberg Lie group.
\end{abstract}
\maketitle
\section{Introduction}
Determining the best metric for a given manifold is a classical problem in Riemannian geometry. This question, as well as problems arising in general relativity have spurred extensive study of the Einstein metrics and its various generalizations. A Riemannian manifold $(M,g)$ is called an Einstein manifold if its Ricci tensor satisfies $\Ric=\lambda g$ for some constant $\lambda\in\bR$. By \cite{DottiMiatello1982}, non-flat unimodular solvable Lie groups do not admit Einstein metrics. It is therefore natural to ask how close these groups are to admitting Einstein metrics. Quasi-Einstein metrics, a relatively newer concept, extend the classical framework of Einstein manifolds.
\begin{defn}
Let $X$ be a vector field of Riemannian manifold $(M,g),$ then $m$-Bakry-\'Emery tensor is defined as
\begin{equation}\label{main equation}
  \mRic:= \Ric + \frac{1}{2}\mathcal{L}_X g-\frac{1}{m}X^*\otimes X^*      
\end{equation}
where $m$ is any non-zero real number, $\mathcal{L}_X g$ is the Lie derivative of $g$ with respect to $X,$ and
\begin{gather*}
    X^*: T_pM\rightarrow \bR\\
    Y\mapsto g(X,Y).
\end{gather*}
\end{defn}
\begin{defn}
A Riemannian manifold is called an $m$-quasi-Einstein manifold if $\mRic = \lambda g$ for some $\lambda\in\bR.$ We denote such a manifold as a triplet $(M,g,X),$ and refer to $\lambda$ as the quasi-Einstein constant.
\end{defn}
 When $m=2$, the solutions are called near-horizon geometries, which are relevant in black hole theory \cite{Kunduri2013-lj}. In equation (\ref{main equation}), if $X=\nabla\phi$ where $\phi:M\longrightarrow \bR$ is a smooth function, then we say $(M,g)$ is gradient $m$-quasi-Einstein. In the gradient case, Case-Shu-Wei proved that any compact $m$-quasi-Einstein metric with constant scalar curvature is Einstein \cite{case2011rigidity}. Gradient m-quasi-Einstein metrics on non-compact homogeneous spaces are studied by Petersen-Wylie \cite{PETERSEN2022101929} and Lafuente \cite{https://doi.org/10.1112/blms/bdu103}. In the non-gradient case, there are many examples of $m$-quasi-Einstein metrics. In \cite{chen-Liang-Zhu}, Chen-Liang-Zhu proved that every compact simple Lie group except $SU(3), E_8$ and $G_2$ admits non-trivial $m$-quasi-Einstein metrics. In \cite{Lim2020locally}, Lim classified the compact locally homogeneous non-gradient quasi-Einstein 3-manifolds. In particular, the $3$-dimensional Heisenberg Lie group $H_3$ admits an $m$-quasi-Einstein metric $(H_3,g,X).$ Here $X$ is a Killing field, i.e., $\cL_X g=0$. This naturally raises the question: beyond the Heisenberg group $H_3,$ which other nilpotent or solvable Lie groups admit $m$-quasi-Einstein metrics? Before addressing this, it is important to note that the classification of solvable Lie algebras remains an ongoing area of research. Nilpotent Lie algebras have been classified primarily in low dimensions, and similarly, the classification of solvable Lie algebras is also limited in scope. 
 
By~\cite[Corollary 2.6]{Lim2020locally}, on an $m$-quasi-Einstein unimodular Lie group \( (G, g, X) \), if both \( X \) and \( g \) are left-invariant, then \( X \) is necessarily a Killing vector field. Since all the $m$-quasi-Einstein metrics we construct in this work feature a left-invariant vector field \( X \), we propose a terminology for this subclass of $m$-quasi-Einstein metrics.
\begin{defn}
An $m$-quasi-Einstein metric $(M,g,X)$ such that $g$ and $X$ are both left-invariant is called totally left-invariant quasi-Einstein.
\end{defn}
In~\cite{chen-Liang-Zhu}, Chen–Liang–Zhu showed that on compact Lie groups, any left-invariant quasi-Einstein metric is necessarily totally left-invariant. Moreover, all examples of gradient $m$-quasi-Einstein metrics on non-compact spaces obtained in~\cite{PETERSEN2022101929} also have the vector field $X$ left-invariant as well. However, in the non-compact setting, it is unknown whether these two notions are equivalent. A brief discussion of the case when $X$ is not left-invariant is included in Section~\ref{subsec B}.

In this paper, we first obtain some structural properties of unimodular solvable Lie groups admitting totally left-invariant quasi-Einstein metrics.
\begin{thm}
Let $(S,g,X)$ be a non-flat unimodular solvable Lie group with totally left-invariant quasi-Einstein metric. Then $X$ is in the center of the Lie algebra of $S,$ and $S$ has one-dimensional center.   
\end{thm}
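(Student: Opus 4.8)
The plan is to work entirely at the level of the metric Lie algebra $(\fs,\langle\cdot,\cdot\rangle)$ and to play off the two descriptions of Ricci curvature that are available: the algebraic formula in structure constants, and the pointwise eigenvalue data coming from the quasi-Einstein equation. First I would invoke \cite[Theorem 2.5]{Lim2020locally} to conclude that $X$ is a Killing field, so $\cL_X g=0$ and $\operatorname{ad}_X$ is skew-symmetric for $\langle\cdot,\cdot\rangle$; the defining equation \eqref{main equation} then collapses to the operator identity $\operatorname{Ric}=\lambda\,\mathrm{Id}+\tfrac1m\langle X,\cdot\rangle X$. Reading off eigenvalues, $\operatorname{Ric}$ acts as $\lambda$ on $X^\perp$ and as $\mu:=\lambda+\tfrac1m|X|^2$ on $\bR X$, and these are its only eigenvalues. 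Note $X\neq0$, for otherwise the metric would be Einstein, contradicting the non-existence of Einstein metrics on non-flat unimodular solvable groups \cite{DottiMiatello1982}.

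Next I would pin down the signs of $\lambda$ and $\mu$. Using the standard curvature formula for a left-invariant metric on a unimodular group, skew-symmetry of $\operatorname{ad}_X$ makes the two $\operatorname{ad}_X$-terms cancel, leaving $\operatorname{Ric}(X,X)=\tfrac14\sum_{i,j}\langle[e_i,e_j],X\rangle^2\ge0$, so $\mu\ge0$. Since $S$ is non-flat it is non-abelian, hence by the unimodular Ricci theory (\cite{DottiMiatello1982}, or Milnor) there is a direction of strictly negative Ricci curvature; as the only candidate is $X^\perp$, this forces $\lambda<0$ (and incidentally $m>0$, since $\mu\ge0>\lambda$). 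Applying the same curvature formula to a central vector $Z$, where $\operatorname{ad}_Z=0$, gives $\operatorname{Ric}(Z,Z)=\tfrac14\sum_{i,j}\langle[e_i,e_j],Z\rangle^2\ge0$; if moreover $Z\perp X$ then $\operatorname{Ric}(Z,Z)=\lambda|Z|^2$, and $\lambda<0$ forces $Z=0$. Thus $\fz\cap X^\perp=0$ and the centre satisfies $\dim\fz\le1$; it then remains only to prove $X\in\fz$, which yields $\fz=\bR X$ and both assertions at once.

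For the centrality of $X$ I would show that $X$ lies in the nilradical $\fn$, after which the argument closes cleanly: for $X\in\fn$ the operator $\operatorname{ad}_X$ is nilpotent on $\fs$, while the Killing property makes it skew-symmetric, and a real operator that is simultaneously nilpotent and skew-symmetric must vanish, so $\operatorname{ad}_X=0$. To locate $X$ I would first record that $\operatorname{ad}_X$ preserves $\fn$ (an ideal) and, being skew, also preserves $\fa:=\fn^\perp$; combined with $[\fs,\fs]\subseteq\fn$ this gives $\operatorname{ad}_X(\fa)\subseteq\fa\cap\fn=0$, so $\operatorname{ad}_X$ is supported on $\fn$. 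The heart of the matter, and the step I expect to be the main obstacle, is ruling out a nonzero $\fa$-component of $X$. Here the key computation is that on $\fa$ the unimodular Ricci formula simplifies to $\operatorname{Ric}(A,A)=-|S_A|^2\le0$, where $S_A$ is the symmetric part of $\operatorname{ad}_A$, whereas the quasi-Einstein identity makes $\operatorname{Ric}-\lambda\,\mathrm{Id}=\tfrac1m\langle X,\cdot\rangle X$ a rank-one positive operator with image $\bR X$. Testing this against vectors of $\fa$, together with the off-diagonal values $\operatorname{Ric}(A,N)=-\tfrac12\operatorname{tr}(\operatorname{ad}_A^{t}\operatorname{ad}_N)$ for $A\in\fa$, $N\in\fn$ (the remaining terms dropping out by solvability), should force the $\fa$-part of $X$ to vanish, so that $X\in\fn$. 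Balancing the positivity of the rank-one correction against the negative semidefiniteness of $\operatorname{Ric}|_{\fa}$ is the delicate point, and it is precisely where the solvable, as opposed to merely nilpotent, structure must enter.
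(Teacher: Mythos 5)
Your peripheral steps are fine, and in places slicker than the paper's: the observation that skew-symmetry of $\operatorname{ad}_X$ plus unimodularity collapses the curvature formula to $\operatorname{Ric}(X,X)=\tfrac14\sum_{i,j}\langle[e_i,e_j],X\rangle^2\ge 0$ recovers Lemmas \ref{Lemma U} and \ref{Lemma X} in one stroke, the sign argument for $\lambda<0$ via Milnor's Theorem \ref{thm T} works, and the closing implications ($X\in\fn$ plus skew $\Rightarrow$ $\operatorname{ad}_X$ nilpotent and skew $\Rightarrow$ $\operatorname{ad}_X=0$; $\lambda<0$ $\Rightarrow$ $\fz(\fs)\cap X^\perp=0$) are correct. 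But the heart of the theorem is exactly the step you leave as ``should force,'' and the ingredients you list demonstrably do not suffice. Testing $\operatorname{Ric}-\lambda\,\mathrm{Id}=\tfrac1m X^*\otimes X^*$ against $A\in\fa$ only yields $-\operatorname{tr}(S(\operatorname{ad}_A)^2)=\lambda|A|^2+\tfrac1m\langle X,A\rangle^2$, and since $\lambda<0$ both sides are happily nonpositive: writing $A_X$ for the $\fa$-component of $X$, this system is consistent whenever $|A_X|^2\le m|\lambda|$, which is compatible with the constraint $|X|^2\ge m|\lambda|$ coming from $\operatorname{Ric}(X,X)\ge0$. No amount of ``balancing'' the rank-one term against $\operatorname{Ric}|_{\fa}\le0$ produces a contradiction by itself. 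A secondary flaw: your cross-term formula $\operatorname{Ric}(A,N)=-\tfrac12\operatorname{tr}(\operatorname{ad}_A^t\operatorname{ad}_N)$ for the nilradical decomposition omits the term $-\tfrac12\sum_i\langle[A,a_i],[N,a_i]\rangle$ (see \eqref{Ric(a,x)}), which does not vanish ``by solvability'' unless $\fa$ is already known to be abelian --- something only established later, and under extra hypotheses, in Theorem \ref{main solvable thm}.

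What actually closes the argument, in the paper, requires two inputs your sketch lacks. First, one works with $\fs=[\fs,\fs]\oplus[\fs,\fs]^\perp$ rather than $\fa\oplus\fn$, because Cartan's criterion $\operatorname{tr}(\operatorname{ad}_h\operatorname{ad}_x)=0$ is available only for $h\in[\fs,\fs]$, not for all of the nilradical, and because the exact cross-term formula in \eqref{ricci} holds for that decomposition. Composing the Killing identity $\operatorname{ad}_h+\operatorname{ad}_h^t=-(\operatorname{ad}_f+\operatorname{ad}_f^t)$ with $\operatorname{ad}_h$, tracing, and applying Cartan's criterion gives $\operatorname{tr}(\operatorname{ad}_h^t\operatorname{ad}_h)=-\operatorname{tr}(\operatorname{ad}_f^t\operatorname{ad}_h)=2\operatorname{Ric}(h,f)$ as in \eqref{2Ric(h,f)}; combined with \eqref{ricci} and \eqref{eqn S} this yields $\operatorname{Ric}(h,X)=\operatorname{Ric}(X,X)$, hence $0=\operatorname{Ric}(f,X)=\nu|f|^2$ with $\nu=\lambda+\tfrac1m|X|^2$. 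Second --- and this is a case your outline never contemplates --- one must dispose of the degenerate possibility $\nu=0$, where the conclusion $f=0$ does not follow and your positivity heuristic is empty (the rank-one correction then pairs to zero against everything). The paper kills it with the Dotti--Miatello structure theorem (Theorem \ref{thm dotti} via Lemma \ref{rmk dotti}): $\nu=0$ and $\lambda<0$ force $\operatorname{Ric}\le0$, hence $\operatorname{Ric}\equiv0$ on $[\fs,\fs]$; but $\nu=0$ also forces $X=f\in[\fs,\fs]^\perp$ by the equality case of \eqref{eqn S}, so $\operatorname{Ric}=\lambda\,\mathrm{Id}<0$ on the nonzero space $[\fs,\fs]\subseteq X^\perp$, a contradiction. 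Only then does $f=0$ hold, whereupon the trace identity gives $\operatorname{tr}(\operatorname{ad}_h^t\operatorname{ad}_h)=0$, i.e.\ $\operatorname{ad}_X=0$. Without the Cartan-criterion trace identity and the Dotti--Miatello input, your proposal does not close.
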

This implies that every non-flat and totally left-invariant quasi-Einstein unimodular solvable Lie group has exactly two Ricci eigenvalues: the Ricci curvature is positive along the center and equal and negative in all other directions. Such a Lie group is not Einstein, but is, in a sense, close to being one. This theorem does not hold without the unimodularity condition. In \cite{PETERSEN2022101929}, there are examples of non-unimodular solvable groups with $m$-quasi-Einstein metric, that do not have a one-dimensional center. By applying this result along with Kirillov’s well-known lemma on nilpotent Lie groups with one-dimensional center, we obtain the classification in the nilpotent case.
\begin{thm}
A nilpotent Lie group admits a totally left-invariant quasi-Einstein metric if and only if it is isomorphic to a Heisenberg Lie group. Moreover, this metric is unique up to isometry and scaling.
\end{thm}
Let \( \mathfrak{s} \) be the Lie algebra of a solvable Lie group. We consider the orthogonal decomposition \( \mathfrak{s} = \mathfrak{a} \oplus \mathfrak{n} \), where \( \mathfrak{n} \) is the nilradical of \( \mathfrak{s} \) (i.e., the maximal nilpotent ideal). In the case where \( \operatorname{ad}_a \) is a normal derivation of \( \mathfrak{s} \) for all \( a \in \mathfrak{a} \), we obtain the following structure theorem.
\begin{thm}\label{thm S}
Let $S$ be a non-flat unimodular solvable Lie group with left-invariant metric $g$ and Lie algebra $\fs=\fa\oplus \fn$ where $\fn$ is the nilradical of $\fs$ and suppose $\ad_a$ is normal for all $a\in\fa.$ Then $(S,g,X)$ is totally left-invariant quasi-Einstein with quasi-Einstein constant $\lambda$ if and only if the following conditions are satisfied:
 \begin{enumerate}[(i)]
     \item $(N,g_1,X)$ is totally left-invariant quasi-Einstein with quasi-Einstein constant $\lambda$ where $N$ is any nilpotent Lie group with the Lie algebra $\fn$ and $g_1=g|_{\fn\times \fn},$
     \item $\fs$ is standard (i.e., $[\fa,\fa]=0$),
     \item $\fz(\fs)=\fz(\fn),$
 \item $g(a,a)=-\frac{1}{\lambda}\tr{S(\ad_a)^2}.$
 \end{enumerate}
\end{thm}
This implies that the nilradical of $\fs$ is Heisenberg Lie algebra, hence we investigate the normal derivations of Heisenberg, and obtain a classification in the solvable case.
\begin{thm}\label{thm Z}
   Let $S$ be a non-flat unimodular solvable Lie group with $\ad_a$ normal for all $a\in \fa$. Then $S$ admits a totally left-invariant quasi-Einstein metric $g$ if and only if its Lie algebra $\fs= \bR^k\ltimes_{\psi} \fh_n$ for some $0\leq k\leq s$ where $s=\frac{1}{2}(n-1),$ $\psi: \bR^k\rightarrow Der(\fh_n) $ is given by $a\mapsto \ad_a|_{\fh_n},$
 \[ \ad_a|_{\fh_n}= \begin{bmatrix}
   t_{1} &  &  & & &\\ 
   & -t_{1} &  & & &\\ 
   &  &  \ddots & & &\\ 
   &  &   & t_{s} & &\\
    &  &   & & -t_{s} & \\
     &  &   & &  & 0\\
 \end{bmatrix} \]
 with respect to the orthonormal basis $\{x_i,y_i,z|\, [x_i,y_j]=\delta_{ij}cz,\, 1\leq i,j\leq s\}$ of $\fh_n$ and $t_i\in \bR$ for all $1\leq i\leq s.$ 
\end{thm}
This classification theorem implies that in every dimension greater than 2, there exists a non-flat solvable Lie group admitting an $m$-quasi-Einstein metric.

A discrete cocompact subgroup $\Gamma$ of a Lie group $G$ is called 
a lattice in $G$. A metric $g$ on $\Gamma\backslash G$ is called 
\emph{invariant} if $\widetilde{g}= \pi^*(g)$ is a left-invariant 
metric on $G$, where $\pi:G\longrightarrow \Gamma\backslash G$ is 
the covering map. As mentioned earlier, it is not known whether every 
quasi-Einstein unimodular solvable Lie group is necessarily totally 
left-invariant quasi-Einstein. However, under the additional assumption 
that the group admits a lattice, every invariant quasi-Einstein metric 
on the quotient $\Gamma\backslash G$ gives rise to a totally 
left-invariant quasi-Einstein metric on $G$, as stated in the 
following theorem.
\begin{thm}
Let $G$ be a connected Lie group with lattice $\Gamma$ and covering 
map $\pi:G\longrightarrow \Gamma\backslash G$. Let $g$ be an invariant 
metric on $\Gamma\backslash G$. Then $(\Gamma\backslash G,g,X)$ is invariant $m$-quasi-Einstein if and only if $(G,\widetilde{g},\widetilde{X})$ is 
totally left-invariant quasi-Einstein and $\widetilde{X}=\pi^*(X)$ 
is left-invariant and Killing.
\end{thm}
Combining this theorem with Theorem~\ref{thm Z}, we readily obtain a structure theorem for the quotient $\Gamma \backslash G$.

A \emph{nilmanifold} is a nilpotent Lie group quotiented by its lattice. In the nilpotent case, the result takes the following form:
\begin{thm}
The only compact nilmanifolds that admit invariant $m$-quasi-Einstein metrics are of the form $\Gamma \backslash H_n$, with $n \geq 3$.
\end{thm}
In the context of black hole physics, this result implies that the only near-horizon geometry on a compact nilmanifold is $\Gamma \backslash H_{n}.$

In fact, when $X$ is Killing, an $m$-quasi-Einstein metric is similar to an \(\eta\)-Einstein metric, which comes from Sasakian geometry. A Sasakian manifold \( M \) of dimension \( 2n + 1 \), equipped with a Sasakian structure \( \mathcal{S} = (\xi, \eta, \Phi, g) \), is said to be \(\eta\)-Einstein if the Ricci curvature tensor of the metric \( g \) satisfies the equation
\[
\operatorname{Ric}_g = \lambda g + \nu\, \eta \otimes \eta
\]
for some constants \( \lambda, \nu \in \mathbb{R} \). These metrics were first introduced and studied by Okumura~\cite{Okumura1962SomeRO}, and later referred to as \(\eta\)-Einstein by Sasaki in his 1965 lecture notes. The Heisenberg group is an example of a Sasakian manifold~\cite{boyer}.

Ricci solitons are another famous generalization of Einstein metrics, and they have been extensively studied using various techniques. A Riemannian
metric $g$ on $M$ is a Ricci soliton if 
\begin{equation}\label{soliton}
  \Ric=\lambda g-\frac{1}{2}\cL_Xg  
\end{equation}
for some $\lambda\in\bR$ and some smooth vector field $X$ on $M$. Note that if $(M,g)$ is a Ricci soliton with $X$ being a Killing field, then $(M,g)$ is an Einstein manifold. Also note that Ricci solitons can be formally considered as $m$-quasi-Einstein metric with $m=\infty.$ In \cite{lauret2010riccisolitonsolvmanifolds}, Lauret proved that any solvable Lie group that admits Ricci soliton is standard and can be obtained via a very simple construction from a nilpotent Lie group that admits a Ricci soliton. But unlike quasi-Einstein metrics, there are no locally-homogeneous compact manifolds that admit non-Einstein Ricci solitons. The Heisenberg group quotiented by a lattice does not admit any invariant Ricci soliton metric, as the soliton vector field does not descend to the quotient. However, the resulting compact quotient carries an invariant $m$-quasi-Einstein metric for all $m>0.$

\medskip
\textit{\textbf{Acknowledgment:} 
The author would like to thank her advisor, Professor William Wylie, for his continuous support and guidance. She is also grateful to Professor Michael Jablonski for carefully reading an earlier draft and providing many insightful comments. She thanks Dr. Roberto Araujo for bringing important observations to her attention, which helped strengthen the results of this paper.}

\section{Preliminaries}\label{sec 2}
We begin by considering the case in which the vector field $X$ in the quasi-Einstein triple $(M,g,X)$ is Killing. In this case, the quasi-Einstein equation acquires a simpler structure and allows for stronger conclusions about the geometry of the manifold.

\begin{rmk}\label{rmk O}
Suppose $(M^n,g,X)$ is $m$-quasi-Einstein with $X$ a Killing vector field. Then, $\cL_X g=0,$ hence the quasi-Einstein equation becomes \[ \Ric{(x,x)}=\begin{cases} 
      (\lambda+\frac{\alpha^2}{m})g(x,x) & \text{ when } X=\alpha x \text{ for some } \alpha\in \bR,\\
      
      \lambda g(x,x) &  \text{ when } x \perp X.
   \end{cases}
\] This implies that the Ricci tensor has exactly two distinct eigenvalues: one along the direction of $X$, and another, of multiplicity $n-1$, along directions orthogonal to $X$.
\end{rmk}
\subsection{Quasi-Einstein metrics on compact locally homogeneous spaces}\label{sec 2.2}
Here, we discuss quasi-Einstein metrics on compact locally homogeneous spaces obtained by quotienting Lie groups by co-compact lattices.
\begin{defn}
A Riemannian manifold $(M,g)$ is locally homogeneous if for every pair of points $x, y\in M$, there exist neighborhoods $U_x$ of $x$ and $V_y$ of $y$ such that there is an isometry $\phi$ mapping $(U_x,g|_{U_x})$ to $(V_y,g|_{V_y})$, with $\phi(x)=y.$
\end{defn}
Let $\Gamma$ be a lattice of the Lie group $G,$ then the compact quotient $\Gamma \backslash G$ is locally homogeneous. In \cite[Lemma 2.4]{Lim2020locally}, Lim proves that if $(G,g,X)$ is \(m\)-quasi-Einstein where $G$ is a connected Lie group with a lattice $\Gamma$, $g$ is left-invariant, and  $X$ is a vector field which is invariant under $\Gamma$, then $X$ is a left-invariant vector field of $G$. Using this Lemma, she proves \cite[Theorem 2.5]{Lim2020locally} which says if $(\Gamma\backslash G, g, X)$ is invariant \(m\)-quasi-Einstein, then $X$ is left-invariant and Killing. Unfortunately, there is an error in the proof of \cite[Lemma 2.4]{Lim2020locally} as is pointed out in \cite{Bahuaud2025-vb} (See remarks on page 8 of \cite{Bahuaud2025-vb} before Lemma 2.8). Note that in the three-dimensional case, \cite[Lemma 2.4]{Lim2020locally} can be proven using a Milnor frame, see \cite[Lemma 2.8]{Bahuaud2025-vb}. Therefore, the validity of main results in \cite{Lim2020locally} is still correct despite the error in the proof. In fact, we give a different approach to justifying that $X$ is left-invariant for a quasi-Einstein Lie group with a lattice, in all dimensions. We use the following theorem from \cite[Theorem 1.1 a)]{Costa-Filho2024-dk} (also see \cite{Ghosh2020mQuasiEinsteinMS} and \cite{cochran2024killingfieldscompactmquasieinstein}).
\begin{thm}\label{Eric}
Let $M$ be closed (i.e. compact and without boundary) and satisfies $\mRic=\lambda g.$ Then
$(M, g)$ has constant scalar curvature if and only if $X$ is Killing.
\end{thm}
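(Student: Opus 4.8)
The plan is to prove Theorem~\ref{Eric} (the last statement in the excerpt) by establishing the two implications separately, relying on the Bochner-type integral identities that are standard for the $m$-Bakry-\'Emery tensor on a closed manifold. I would start from the $m$-quasi-Einstein equation $\mRic{} = \Ric{} + \tfrac12\mathcal{L}_X g - \tfrac1m X^*\otimes X^* = \lambda g$ and take its trace to obtain the pointwise relation
\begin{equation*}
  R + \operatorname{div}(X) - \tfrac1m\norm{X}^2 = n\lambda,
\end{equation*}
where $R$ is the scalar curvature and $n = \dim M$. Since $M$ is closed, $\int_M \operatorname{div}(X)\, dV = 0$, so integrating gives a relation between $\int_M R$ and $\int_M \norm{X}^2$. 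This is the easy bookkeeping that sets up both directions.

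The backward direction ($X$ Killing $\Rightarrow$ constant scalar curvature) I expect to be short. If $\mathcal{L}_X g = 0$, then the quasi-Einstein equation reduces to $\Ric{} = \lambda g + \tfrac1m X^*\otimes X^*$; tracing gives $R = n\lambda + \tfrac1m\norm{X}^2$. A Killing field has constant norm $\norm{X}^2$ precisely when $\nabla_X X$ behaves appropriately — more carefully, one uses that for a Killing field the function $\norm{X}^2$ need not be constant in general, so instead I would argue directly that a Killing field preserves the metric and hence preserves $\Ric{}$, forcing $R = \operatorname{tr}_g\Ric{}$ to be invariant under the flow of $X$; combined with $\mathcal{L}_X g = 0$ and the structure of the equation one deduces $R$ is constant. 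The cleanest route is to differentiate the traced equation and use the contracted second Bianchi identity $\tfrac12 \nabla R = \operatorname{div}(\Ric{})$.

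The forward direction ($R$ constant $\Rightarrow$ $X$ Killing) is where the real work lies, and I expect this to be the main obstacle. The strategy is to derive a Bochner/Weitzenb\"ock-type integral identity for the symmetric tensor $\tfrac12\mathcal{L}_X g$ (equivalently for the traceless part of the deformation tensor) and show that, under the constant-scalar-curvature hypothesis together with the quasi-Einstein equation, the integrand is a sum of nonnegative (or non-positive) terms that must each vanish. Concretely, I would contract the quasi-Einstein equation against $\mathcal{L}_X g$ and integrate, using the commutation formula relating $\operatorname{div}(\mathcal{L}_X g)$ to $\Ric{}(X,\cdot)$, $\nabla\operatorname{div}(X)$, and the rough Laplacian of $X^\flat$. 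Integration by parts then converts the cross terms into squared norms; the constant-$R$ hypothesis kills the boundary-obstruction term that otherwise appears (this is exactly where $\nabla R = 0$ is used). The delicate point is to keep careful track of the sign of the $-\tfrac1m X^*\otimes X^*$ contribution and to verify that the resulting integral inequality forces $\mathcal{L}_X g = 0$ rather than merely $\operatorname{div}(X)=0$.

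Since this theorem is quoted verbatim from \cite[Theorem 1.1 a)]{Costa-Filho2024-dk}, I would in practice either cite their argument directly or reproduce the Bochner identity they use; the novelty in the present paper is the \emph{application} of this statement (to show $X$ is left-invariant and Killing on compact quotients $\Gamma\backslash G$), not a new proof of it. Accordingly, the most efficient presentation is to record the traced identity above, note the short Killing $\Rightarrow$ constant-$R$ implication, and defer the harder converse to the cited integral Bochner computation, flagging that the constant-scalar-curvature assumption is precisely what removes the obstruction to $X$ being Killing.
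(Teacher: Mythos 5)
The paper gives no proof of this theorem at all: it is imported verbatim from \cite[Theorem 1.1 a)]{Costa-Filho2024-dk} (see also \cite{cochran2024killingfieldscompactmquasieinstein}), so your closing recommendation --- record the traced identity, handle the easy direction, and defer the hard converse to the cited Bochner-type computation --- is exactly the paper's own treatment, and your proposal is acceptable on those terms. One small correction to your backward direction: the flow-invariance argument you float first is insufficient, since $X$ preserving $\Ric{}$ only yields $X(R)=0$ (constancy along flow lines), not $\nabla R=0$; it is the contracted second Bianchi identity applied to $\Ric{}=\lambda g+\frac{1}{m}X^*\otimes X^*$ --- which, using $\operatorname{div}X=0$ and $\nabla_XX=-\frac{1}{2}\nabla\abs{X}^2$ for Killing fields, forces $\nabla\abs{X}^2=0$ --- that actually closes that direction, as you correctly identify at the end when you call it the cleanest route.
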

Using this we prove the following theorem.
\begin{thm}\label{thm A}
Let $G$ be a connected Lie group with lattice $\Gamma$ and covering 
map $\pi:G\longrightarrow \Gamma\backslash G$. Let $g$ be an invariant 
metric on $\Gamma\backslash G,$ such that $\tilde{g}=\pi^*(g)$. Then $(\Gamma\backslash G,g,X)$ is invariant $m$-quasi-Einstein if and only if $(G,\tilde{g},\tilde{X})$ is 
totally left-invariant quasi-Einstein and $\tilde{X}=\pi^*(X)$ 
is left-invariant and Killing.
\end{thm}
\begin{proof}
Assume $(\Gamma\backslash G,g,X)$ is invariant $m$-quasi-Einstein.  
Since $\Gamma\backslash G$ is a locally homogeneous space, it has constant scalar curvature. By Theorem \ref{Eric}, the vector field $X$ is Killing, so
\[
\mathcal{L}_X g = 0.
\]

Since $\pi$ is a local isometry, it follows that
\[
\mathcal{L}_{\tilde X} \tilde g = 0,
\]
so $\tilde X$ is Killing on $G$. 
 This implies that, on $\Gamma\backslash G,$ 
\[ \Ric{(x,x)}=\begin{cases} 
      (\lambda+\frac{\alpha^2}{m})g(x,x) & \text{ when } X=\alpha x \text{ for some } \alpha\in \bR,\\
      
      \lambda g(x,x) &  \text{ when } x \perp X.
   \end{cases}
\]
This equation lifts under $\pi,$ which implies that $(G,\Tilde{g},\Tilde{X}) $ is \(m\)-quasi-Einstein. Moreover, we get that $\Tilde{X}$ is an eigenvector of the Ricci tensor on $(G,\Tilde{g}).$ Hence the span of $\Tilde{X}$ is a one-dimensional eigenspace of Ricci tensor. But $\Tilde{g}$ is left-invariant, so the Ricci tensor is left-invariant on $ G$ as well, hence the Ricci eigenspaces are left-invariant, which implies that for any $h,p\in  G,$
\begin{equation}\label{eqn:ZZZ}
    dL_{h}(\tilde X_p)=\alpha \, \tilde X_{hp}
\end{equation}
for some $\alpha \in \mathbb{R}$. Moreover, since $(G,\tilde g,\tilde X)$ is $m$-quasi-Einstein and $\tilde X$ is Killing, taking the trace of the quasi-Einstein equation yields
\[
\Scal = \frac{1}{m}|\tilde X|^2 + n\lambda,
\]
where $n = \dim(G)$. Since $G$ is homogeneous, $\Scal$ is constant, and hence $|\tilde X|$ is also constant. Therefore,
\begin{equation*}
|\alpha| \, |\tilde X_{hp}|
= |dL_h(\tilde X_p)|
= |\tilde X_p|
= |\tilde X_{hp}|.
\end{equation*}
Here, the first equality follows from \eqref{eqn:ZZZ}, the second from the fact that $\tilde g$ is left-invariant (so $L_h$ is an isometry), and the third from the assumption that $|\tilde X|$ is constant. 

Thus, $|\alpha| = 1$, i.e., $\alpha = \pm 1$. Since $G$ is connected, it follows that $\alpha = 1$, and hence $\tilde X$ is left-invariant.

($\Longleftarrow $) Suppose $(G,\Tilde{g},\Tilde{X}) $ is \(m\)-quasi-Einstein with $\Tilde{X}, \Tilde{g} $ left-invariant. Since $\pi$ is a local isometry, for every point $h\in G$ there is a neighborhood $U$ such that $\pi^{-1}|_U$ is an isometry. Hence $(\Gamma\backslash G,g,X)$ is invariant \(m\)-quasi-Einstein as well.
\end{proof}
The above theorem implies that, once we have examples of Lie groups admitting totally left-invariant quasi-Einstein metrics and lattices, we can construct compact manifolds that admit invariant \(m\)-quasi-Einstein metrics, in contrast to the case of Ricci solitons.

\subsection{Quasi-Einstein metrics on unimodular Lie groups}

Here we discuss how the $m$-quasi-Einstein equation simplifies when 
the Lie group is unimodular.

\begin{defn}
A Lie group $G$ is called unimodular if its left-invariant 
Haar measure is also right-invariant.
\end{defn}

We also recall the definition of the adjoint operator.

\begin{defn}
Let $(\fg,[\cdot,\cdot])$ be a Lie algebra. For any $x\in\fg,$ the adjoint operator is the linear map
\[
\ad_x: \fg\rightarrow \fg, \qquad y\mapsto [x,y].
\]
\end{defn}

A connected Lie group is unimodular if and only if $\tr(\ad_x)=0$ 
for every $x$ in its Lie algebra. Also, every Lie group 
admitting a discrete lattice is unimodular.

It turns out that if $(G,g,X)$ is quasi-Einstein and $G$ is 
unimodular, then in certain situations the vector field $X$ is forced 
to be Killing. We use the following result from \cite{Lim2020locally}.

\begin{thm}\cite[Corollary 2.6]{Lim2020locally}\label{Lim}
If $G$ is a unimodular Lie group and $(G,g,X)$ is $m$-quasi-Einstein 
with left-invariant metric $g$ and left-invariant vector field $X,$ 
then $X$ is a Killing field.    
\end{thm}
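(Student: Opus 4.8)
The plan is to reduce the Killing condition to a purely Lie-algebraic statement about $\operatorname{ad}_X$ and then extract it from the quasi-Einstein equation by a single trace computation. Working on the Lie algebra $\fg$ with the inner product induced by $g$, I would first apply the Koszul formula for a left-invariant metric, where every derivative term drops out because left-invariant fields have constant inner products. A short computation then shows that $\tfrac12\cL_X g$ corresponds, as a symmetric operator, to $-\tfrac12(\operatorname{ad}_X+\operatorname{ad}_X^{*})$; hence $X$ is Killing if and only if the symmetric part $D_s:=\tfrac12(\operatorname{ad}_X+\operatorname{ad}_X^{*})$ of $D:=\operatorname{ad}_X$ vanishes. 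The operator $D_s$ is symmetric, it satisfies $\operatorname{tr} D_s=\operatorname{tr}\operatorname{ad}_X=0$ by unimodularity, and $\langle D_s X,X\rangle=\tfrac12\langle X,\operatorname{ad}_X X\rangle=0$ because $\operatorname{ad}_X X=[X,X]=0$.

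Next I would rewrite the defining equation~\eqref{main equation} as an identity of symmetric operators, $\operatorname{Ric}=\lambda\,\mathrm{Id}+D_s+\tfrac1m\,X^{*}\otimes X^{*}$, and pair it with $D_s$ in the trace form. Using $\operatorname{tr} D_s=0$ and $\langle D_s X,X\rangle=0$, both the $\lambda$-term and the $X^{*}\otimes X^{*}$-term drop out, leaving $\operatorname{tr}(\operatorname{Ric}\circ D_s)=\operatorname{tr}(D_s^2)$. Since $D_s$ is symmetric, $\operatorname{tr}(D_s^2)=\norm{D_s}^2\ge 0$, with equality precisely when $D_s=0$. Thus the entire theorem reduces to proving $\operatorname{tr}(\operatorname{Ric}\circ D_s)=0$.

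To establish $\operatorname{tr}(\operatorname{Ric}\circ D_s)=0$ I would use a variational argument. The inner automorphisms $\Phi_t=\operatorname{Ad}(\exp tX)$ (conjugation by $\exp tX$) pull $g$ back to a smooth family $g_t=\Phi_t^{*}g$ of left-invariant metrics, each isometric to $g$; hence every $g_t$ has the same constant scalar curvature. Differentiating at $t=0$ gives $\dot g_0=2D_s$ as a left-invariant symmetric $2$-tensor. Feeding $h=\dot g_0$ into the first-variation formula $D\operatorname{scal}(h)=-\Delta(\operatorname{tr} h)+\operatorname{div}\operatorname{div}(h)-\langle\operatorname{Ric},h\rangle$, the first term vanishes because $\operatorname{tr} h=2\operatorname{tr} D_s=0$, and the second vanishes because $\operatorname{div} h$ is a left-invariant $1$-form and the divergence of any left-invariant vector field on a unimodular group equals $-\operatorname{tr}\operatorname{ad}=0$. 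As $\operatorname{scal}(g_t)$ is independent of $t$, the left side is $0$, forcing $\langle\operatorname{Ric},h\rangle=2\operatorname{tr}(\operatorname{Ric}\circ D_s)=0$. Combined with the preceding paragraph this gives $\operatorname{tr}(D_s^2)=0$, so $D_s=0$ and $X$ is Killing.

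The main obstacle is this final step, the vanishing of $\operatorname{tr}(\operatorname{Ric}\circ D_s)$. The delicate points are confirming that the automorphism pullbacks $g_t$ are genuinely left-invariant and isometric to $g$ (so that scalar curvature is $t$-independent) and checking that the $\operatorname{div}\operatorname{div}$ term in the first-variation formula really vanishes for left-invariant tensors on a unimodular group. As an alternative that sidesteps the first-variation formula, one could prove the Lie-algebraic identity $\operatorname{tr}\bigl(\operatorname{Ric}\circ(\operatorname{ad}_Z)_{\mathrm{sym}}\bigr)=0$ for all $Z$ directly from Milnor's structure-constant expression for the Ricci operator of a unimodular left-invariant metric, at the cost of a more computational argument.
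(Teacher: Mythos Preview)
The paper does not prove this theorem; it quotes it from \cite{Lim2020locally} (Lim's Theorem~2.5) and uses it as a black box. So there is no ``paper's own proof'' to compare against, and your proposal should be judged on its own merits.

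Your argument is correct. The reduction to showing $\operatorname{tr}(\operatorname{Ric}\circ D_s)=0$ is clean: once you write the quasi-Einstein equation as $\operatorname{Ric}=\lambda I+D_s+\tfrac1m X^*\otimes X^*$ and trace against $D_s$, unimodularity kills the $\lambda$-term and $[X,X]=0$ kills the rank-one term, leaving $\operatorname{tr}(\operatorname{Ric}\circ D_s)=\|D_s\|^2$. The variational step is also sound. Since $\Phi_t=\operatorname{Ad}(\exp tX)=e^{t\,\operatorname{ad}_X}$ is a one-parameter family of Lie algebra \emph{automorphisms}, the pulled-back inner products $g_t(u,v)=g(\Phi_t u,\Phi_t v)$ define left-invariant metrics on $G$ that are pairwise isometric via the group automorphisms $C_{\exp tX}$, so $\operatorname{scal}(g_t)$ is indeed constant in $t$; and $\dot g_0=\operatorname{ad}_X+\operatorname{ad}_X^*=2D_s$ as you say. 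In the first-variation formula the $\Delta(\operatorname{tr} h)$ term vanishes since $\operatorname{tr} h=2\operatorname{tr}\operatorname{ad}_X=0$, and for the $\operatorname{div}\operatorname{div}$ term one checks that $\sum_i\nabla_{e_i}e_i=0$ on a unimodular group (Koszul gives $\langle\sum_i\nabla_{e_i}e_i,e_k\rangle=-\operatorname{tr}\operatorname{ad}_{e_k}=0$), so the divergence of any left-invariant $1$-form vanishes. Hence $0=-\langle\operatorname{Ric},2D_s\rangle$, and you are done.

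Your flagged ``delicate points'' are genuine but both resolve exactly as you anticipate. The alternative you mention, proving $\operatorname{tr}(\operatorname{Ric}\circ(\operatorname{ad}_Z)_{\mathrm{sym}})=0$ directly from structure constants, is closer in spirit to Lim's original computation; your variational argument is more conceptual and arguably cleaner.
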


In our language, this says that if $G$ is unimodular and $(G,g,X)$ 
is totally left-invariant quasi-Einstein, then $X$ is Killing. Note 
that the unimodularity assumption is necessary: by 
\cite[Proposition~7.6]{PETERSEN2022101929}, non-unimodular groups 
with Lie algebra $\bR\xi \ltimes \fh$ admit $m$-quasi-Einstein 
metrics with $\lambda=0$, provided $\fh$ is Abelian, $\ad_{\xi}$ is 
a normal derivation, and 
\[
\tr(S^2)=-\frac{(\tr S)^2}{m},
\]
where $S=\frac{1}{2}(\ad_{\xi}+\ad_{\xi}^t)$. In this case, $X$ and 
$g$ are left-invariant but $X$ is not Killing.

\section{Classification of quasi-Einstein nilpotent Lie groups}\label{sec nilpotent}

Let $(N,g)$ be a nilpotent Lie group with Lie algebra $\fn.$ Since $g$ induces an inner product on $\fn,$ we view $(\fn,g)$ as a metric Lie algebra.

For every $x\in \fn,$ the adjoint operator $\ad_x$ is nilpotent and hence $\tr{(\ad_x)}=0$ which implies that every nilpotent Lie group is unimodular. In order to find the nilpotent Lie groups that admit totally left-invariant quasi-Einstein metric, we first find the left-invariant and Killing vector fields.
\begin{prop} \label{killing}
A vector field $X$ on a nilpotent metric Lie algebra $(\mathfrak{n},g)$ is Killing if and only if $X \in \mathfrak{z}(\mathfrak{n})$, the center of $\mathfrak{n}$.
\end{prop}
\begin{proof}
Let $\{e_1,...e_n\}$ be an orthonormal basis of $\fn$ and $X\in\fn$. Then
\begin{gather*}
  \Lxg{(e_i,e_j)}= \nabla_X(g(e_i,e_j))-g(\cL_X e_i,e_j)-g(e_i,\cL_X e_j)  
\end{gather*}
Since $g$ is left-invariant, the first term is zero and since $\cL_Xe_i= [X, e_i],$ we have 
\begin{align*}
\Lxg(e_i,e_j)
&= -g([X,\,e_i],e_j)-g(e_i,[X,\,e_j])\\
&= -g(\operatorname{ad}_X e_i,e_j)-g(e_i,\operatorname{ad}_X e_j)\\
&= -g((\operatorname{ad}_X + \operatorname{ad}_X^t)e_i,e_j)
\end{align*}
for every $i,j$. Hence $X$ is Killing if and only if $\ad_X$ is skew-symmetric. But $\fn$ is nilpotent, so $\ad_X^k=0$ for some $k.$ A skew-symmetric nilpotent operator must vanish. Hence $\ad_X=0,$ which happens if and only if $X$ is in $\fz(\fn).$ 
\end{proof}
\begin{rmk}\label{rmk A}
In general, $x$ is a left-invariant and Killing vector field of a Lie group $(G,g)$ with the Lie algebra $\fg$ if and only if $\ad_x$ is skew-symmetric. If $g$ is bi-invariant, $\ad_x$ is skew-symmetric for any $x\in \fg$ and hence every left-invariant vector field of $G$ is Killing.   
\end{rmk}
From \cite[Equation 8]{MR1825405}, for any nilpotent Lie algebra $\fn,$
\begin{equation}\label{nilpotent ricci formula}
\Ric{(e_i,e_j)}= -\frac{1}{2}\sum_{k,l} g([e_i,e_k],e_l)g([e_j,e_k],e_l)
+\frac{1}{4} \sum_{k,l} g([e_k,e_l],e_i)g([e_k,e_l],e_j),
\end{equation}
where $e_i$'s form an orthonormal basis of $\fn.$ Several conclusions follow easily from the Ricci formula:
\begin{rmk}\label{formula}
By equation \eqref{nilpotent ricci formula}, the center of $\fn$ has non-negative Ricci curvature, and $[\fn,\fn]^{\perp}$, the orthogonal complement of the commutator, has non-positive Ricci curvature. In fact, \cite{MILNOR1976293} proves that for any left-invariant metric on a non-Abelian nilpotent Lie group, there exists a direction of strictly negative Ricci curvature and a direction of strictly positive Ricci curvature. Moreover, the vector with strictly negative Ricci curvature lies in $[\fn,\fn]^{\perp}$, while the vector with strictly positive curvature lies in $\fz(\fn)\cap [\fn,\fn]$. 
Note that $\fn \neq [\fn,\fn]$ by the nilpotency of $\fn$, hence $[\fn,\fn]^{\perp}$ is always non-empty. Furthermore, since $[\fn,\fn]$ is an ideal of $\fn$, the intersection $\fz(\fn) \cap [\fn,\fn]$ is also non-empty, as every ideal in a nilpotent Lie algebra intersects the center. 
\end{rmk}
The next lemma will imply that the Abelian Lie groups are the only totally left-invariant quasi-Einstein nilpotent Lie group with $\lambda=0.$ 
\begin{lemma}\label{zero eigenvalue}
Let $N$ be a nilpotent Lie group with totally left-invariant quasi-Einstein metric. Then $\lambda=0$ if and only if $N$ is Abelian.
\end{lemma}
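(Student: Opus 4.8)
The plan is to reduce the $m$-Bakry–Émery equation to a pure Ricci identity and then exploit a sign clash between the curvature of the center and the strictly negative Ricci directions that Milnor's theorem provides. Since $N$ is nilpotent it is unimodular, so Theorem~\ref{Lim} forces the left-invariant field $X$ to be Killing; thus $\cL_X g=0$, and Proposition~\ref{killing} places $X$ in the center $\fz(\fn)$. The defining equation~(\ref{main equation}) then collapses to
\begin{equation*}
\Ric = \lambda g + \tfrac{1}{m}\,X^{\ast}\otimes X^{\ast}. \tag{$\star$}
\end{equation*}
In particular every $v\perp X$ lies in the $\lambda$-eigenspace of $\Ric$, while $X$ is an eigenvector with eigenvalue $\mu := \lambda + \tfrac{1}{m}\lvert X\rvert^{2}$; so the Ricci tensor has at most the two eigenvalues $\lambda$ and $\mu$, with orthogonal eigenspaces $X^{\perp}$ and $\mathrm{span}(X)$.

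For the easy implication, assume $N$ is Abelian. Then all brackets vanish, so the Ricci formula~(\ref{nilpotent ricci formula}) gives $\Ric=0$. Picking any nonzero $v\perp X$ and evaluating $(\star)$ yields $0=\lambda\lvert v\rvert^{2}$, hence $\lambda=0$. (The only degenerate point is $\dim N=1$, where $X^{\perp}=0$; this I would treat separately.)

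The content is the converse, which I would prove in contrapositive form: if $N$ is non-Abelian then $\lambda\neq 0$. First I dispose of $X=0$, for then $(\star)$ asserts that $(N,g)$ is Einstein, whereas Remark~\ref{formula} (Milnor) furnishes both a strictly positive Ricci direction in $\fz(\fn)\cap[\fn,\fn]$ and a strictly negative one in $[\fn,\fn]^{\perp}$, incompatible with a single Ricci eigenvalue; hence $X\neq 0$. Now comes the crucial sign argument: since $X\in\fz(\fn)$, Remark~\ref{formula} gives $\Ric{(X,X)}\ge 0$, i.e. $\mu\ge 0$; on the other hand non-Abelianness supplies a unit $v_0\in[\fn,\fn]^{\perp}$ with $\Ric{(v_0,v_0)}<0$. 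Decomposing $v_0 = a\,X/\lvert X\rvert + w$ with $w\perp X$ and using orthogonality of the two eigenspaces, $\Ric{(v_0,v_0)} = a^{2}\mu + \lvert w\rvert^{2}\lambda$. As $a^{2}\mu\ge 0$, the negativity forces $\lvert w\rvert^{2}\lambda<0$, so $\lambda<0$ and in particular $\lambda\neq 0$.

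I expect the main obstacle to be exactly this last step: ruling out the possibility that Milnor's negative Ricci direction is absorbed by the $X$-eigenspace rather than by $X^{\perp}$. The positivity of Ricci on the center is what resolves it, since it pins $\mu\ge 0$ and thereby transfers all of the negativity onto $\lambda$. The remaining work is routine bookkeeping for the low-dimensional and $X=0$ degeneracies; the heart of the proof is the two-eigenvalue splitting in $(\star)$ combined with the center/commutator sign dichotomy of Remark~\ref{formula}.
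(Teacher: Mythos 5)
Your proof is correct and follows essentially the same route as the paper's: Theorem~\ref{Lim} plus Proposition~\ref{killing} to place $X$ in $\fz(\fn)$, the resulting two-eigenvalue splitting of $\Ric$, and Milnor's sign dichotomy from Remark~\ref{formula} to force the contradiction. In fact you prove the slightly stronger statement $\lambda<0$ for non-Abelian $N$ (which is the paper's Theorem~\ref{Thm C}\,$ii)$), and your explicit decomposition $v_0=a\,X/\lvert X\rvert+w$ handles a detail the paper's terse lemma proof leaves implicit, namely that Milnor's negative direction need not be orthogonal to $X$.
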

\begin{proof}
Let $e_1,...,e_n$ be an orthonormal basis of $\fn.$ Since $X$ is left-invariant, we can assume that $X=\alpha e_1$  for some $\alpha\in \bR.$ Suppose $\lambda = 0$. Then $\Ric(e_i, e_i) = 0$ for all $2 \leq i \leq n$, since $N$ is totally left-invariant quasi-Einstein. Regardless of the value of $\Ric(e_1, e_1)$, $\fn$ either has no vector with positive Ricci curvature, or no vector with negative Ricci curvature, or no vectors with nonzero Ricci curvature at all. This contradicts Remark \ref{formula}. Hence $N$ is Abelian. The proof of the other direction is trivial. Since $N$ is Abelian, it is Ricci flat and hence $\lambda=0,$ with $X=0.$ 
\end{proof}
The next theorem provides some information about non-Abelian $m$-quasi-Einstein nilpotent Lie groups.
\begin{thm}\label{Thm C}
Suppose $(N,g,X)$ is a non-Abelian nilpotent Lie group that admits a totally left-invariant quasi-Einstein metric. Then
\begin{enumerate}[i)]
    \item The Ricci curvature of $N$ has two eigenvalues, one with multiplicity one in the direction of central element and another with multiplicity $\dim N -1.$

         \item $\lambda< 0,$ 
           \item $N$ has one-dimensional center,
         \item $\fz(\fn)\subset [\fn,\fn].$
      
\end{enumerate}
\end{thm}   
\begin{proof}
Proof of $i):$ Since $X$ is Killing, Proposition \ref{killing} implies that $X\in\fz(\fn)$. Also, being quasi-Einstein implies that \begin{equation}\label{eqn O}
\Ric{(x,x)}=\begin{cases} 
      (\lambda+\frac{\alpha^2}{m})g(x,x) & \text{ when } X=\alpha x \text{ for some } \alpha\in \bR,\\
      
      \lambda g(x,x) &  \text{ when }x\perp X
   \end{cases}
\end{equation}
as in Remark \ref{rmk O}.

To prove $ii),$ from Remark \ref{formula}, there exists a unit vector $x\in [\fn,\fn]^{\perp}$ such that $\Ric{(x,x)}<0.$ Suppose $x\perp X,$ then $i)$ implies that  $\lambda=\Ric(x,x)<0,$ hence we are done. So assume that $x=\alpha X + y$ for some $\alpha\in\bR,$ and $y\perp X.$ Then
\begin{align}\label{eqn Y}
    \Ric{(x,x)}
    &= \Ric{(\alpha X + y,\alpha X + y)}\\
    &=\alpha^2\Ric{(X,X)} + 2\alpha \Ric{(X,y)}+  \Ric{(y,y)}.\nonumber
\end{align}
Since $X$ is an eigenvector of the Ricci tensor, $\Ric{(X,y)}=0$ as $g(X,y)=0.$ Also, $i)$ implies that $\Ric{(y,y)}=\lambda g(y,y).$ So equation (\ref{eqn Y}) becomes 
\begin{equation*}
    \Ric{(y,y)}= Ric{(x,x)}-\alpha^2\Ric{(X,X)}.
\end{equation*}
But $\Ric{(x,x)}$ is given to be strictly negative. On the other hand, since $X$ is Killing, Proposition~\ref{killing} implies that $X$ lies in the center, and hence $\Ric(X,X)\geq 0.$ Therefore, $\Ric{(y,y)}=\lambda g(y,y) <0,$ which forces $\lambda<0.$

To prove $iii),$ from Proposition \ref{killing}, we have $X\in\fz(\fn).$ Let $y\in \fz(\fn)$ be a unit vector perpendicular to $X.$ Then $i)$ implies that $\Ric{(y, y)}=\lambda.$ But $\lambda<0,$ by $ii).$ This is a contradiction: since $y\in \fz(\fn),$ Remark \ref{formula} implies that $\Ric{(y,y)}\geq 0.$ Hence there are no vectors in $\fz(\fn)$ that are perpendicular to $X.$

To prove $iv),$ we know that  $\fz(\fn)$ is one-dimensional, and $\fz(\fn)\cap [\fn,\fn]\neq 0.$  Thus $\fz(\fn)\subset [\fn,\fn].$
 \end{proof}
\begin{rmk}
Since $X$ lies in the center, it has non-negative Ricci curvature. 
Hence
\[
\Ric(X,X) = \Bigl(\lambda + \tfrac{|X|^2}{m}\Bigr)|X|^2 \geq 0.
\]
Since $\lambda < 0$, this forces $m$ to be strictly positive.
\end{rmk}
This theorem gives us a class of examples of \(m\)-quasi-Einstein manifolds, the Heisenberg Lie groups. 
\begin{defn}
The Heisenberg group $H_n$ is the group of $(s+2)\times (s+2)$ matrices having the form
\[ \begin{pmatrix} 1 & \textbf{a} & c \\ 
\textbf{0} & I_s & \textbf{b}   \\ 
0 &  \textbf{0}   & 1 \end{pmatrix}\]
where  $s=\frac{n-1}{2}, \textbf{a}$ is an $s$-row vector, \textbf{b} is an $s$-column vector, and $I_s$ is the $s$-dimensional identity matrix. The corresponding Lie algebra, called Heisenberg Lie algebra, is the $n$-dimensional Lie algebra, spanned by $\{z,x_1,...,x_s,y_1,...y_s\}$ with the Lie brackets $[x_i,y_i]=z$ for all $1\leq i\leq s$ and every other Lie brackets zero. We will denote it as $\fh_{n}.$ 
\end{defn}
\begin{rmk}\label{rmk Z8}
 By \cite[Prop. 3.1]{GOZE2000615}, every two-step nilpotent Lie algebra with one-dimensional center is isomorphic to a Heisenberg Lie algebra.   
\end{rmk}
\begin{thm}\label{heisenberg}
The Heisenberg Lie group in every dimension admits a totally left-invariant quasi-Einstein metric. Moreover, such a metric is unique up to isometry and scaling.
\end{thm}
\begin{proof}
Let
\begin{equation}\label{basis}
    \{x_i, y_i, z : 1 \leq i \leq s\}
\end{equation}
be an orthonormal basis of $\mathfrak{h}_{2s+1}$ such that $[x_i, y_i] = z$. Let 
$\sigma = (\sigma_1, \ldots, \sigma_{s-1})$ be an $(s-1)$-tuple of positive real numbers 
with $\sigma_1 \geq \cdots \geq \sigma_{s-1} \geq 1$, and let $\lambda' > 0$. 

By \cite[Theorem~3.1]{VUKMIROVIC201572}, up to isometry and scaling, any left-invariant 
Riemannian metric on $H_{2s+1}$ takes the form
\[
g^{\sigma}_{\lambda',1} = \mathrm{diag}\!\left(
\sigma_1,\, \sigma_1,\, \ldots,\, \sigma_{s-1},\, \sigma_{s-1},\, 1,\, 1,\, \lambda'
\right)
\]
in basis \eqref{basis}. By \cite[Theorem~4.1]{VUKMIROVIC201572}, the Ricci curvature 
in the same basis is
\[
\rho = -\frac{\lambda'}{2}\,\mathrm{diag}\!\left(
\frac{1}{\sigma_1},\, \frac{1}{\sigma_1},\, \ldots,\,
\frac{1}{\sigma_{s-1}},\, \frac{1}{\sigma_{s-1}},\,
1,\, 1,\, -\lambda'\!\left(1 + \mathcal{H}(\sigma)\right)
\right),
\]
where $\mathcal{H}(\sigma) = \sum_{i=1}^{s-1} \dfrac{1}{\sigma_i}$. Therefore, by 
Theorem~\ref{Thm C}, the manifold $(H_{2s+1}, g^{\sigma}_{\lambda',1})$ is totally left-invariant quasi-Einstein if and only if $\sigma_i = 1$ for all 
$1 \leq i \leq s-1$.
\end{proof}
To prove that Heisenberg is the only non-Abelian totally left-invariant quasi-Einstein nilpotent group, we use the Kirillov Lemma \cite[Lemma~4.1]{Kir62}.
\begin{thm}\label{kirillov lemma}(Kirillov Lemma)
If $\fg$ is a nilpotent Lie algebra with one-dimensional center $\cZ$, then there exists a decomposition $\fg=\cX + \cY+\cZ+ \cW$ 
 in which the vector spaces $\cX, \cY$ and $\cZ$ are one-dimensional, and whose respective basis vectors $x,y,z$ may be chosen such that
$$[x,y]=z, [y,w]=0   \text{ for } w\in \cW.$$ Moreover, $\cY\oplus \cZ\oplus \cW$ is an ideal of $\fg.$
\end{thm}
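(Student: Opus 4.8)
The plan is to build the decomposition around a single well-chosen element, using nilpotency only once at the very start. I will assume $\fg$ is non-Abelian (otherwise the one-dimensional center forces $\dim\fg=1$ and the statement is vacuous). Writing $\cZ=\fz(\fg)=\langle z\rangle$, the quotient $\fg/\cZ$ is then a nonzero nilpotent Lie algebra, so it has nonzero center. First I would pull this back: choose $y\in\fg\setminus\cZ$ so that $y+\cZ$ is a nonzero central element of $\fg/\cZ$, which is exactly the condition $[\fg,y]\subseteq\cZ$. Since $y$ is not central, $ad_y\neq 0$, and because $\cZ$ is one-dimensional this forces $[\fg,y]=\langle z\rangle$. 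In particular there is some $x\in\fg$ with $[x,y]=z$, producing the vectors $x,y,z$ and the bracket relation $[x,y]=z$ demanded by the statement.

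Next I would pass to the centralizer $\mathfrak{c}=\ker(ad_y)=\{u\in\fg:[u,y]=0\}$. Its image being $\langle z\rangle$, the map $ad_y$ has rank one, so $\mathfrak{c}$ has codimension one; it contains $y$ and $z$ (as $z$ is central) but not $x$ (as $[x,y]=z\neq 0$). This gives the splitting $\fg=\langle x\rangle\oplus\mathfrak{c}$ at once. Because $y\notin\cZ$, the vectors $y,z$ are independent and lie in $\mathfrak{c}$, so I can choose any vector-space complement $\cW$ to $\langle y,z\rangle$ inside $\mathfrak{c}$. Setting $\cX=\langle x\rangle$ and $\cY=\langle y\rangle$ yields $\fg=\cX+\cY+\cZ+\cW$, and the relation $[y,w]=0$ for $w\in\cW$ comes for free since $\cW\subseteq\mathfrak{c}=\ker(ad_y)$.

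For the final claim I would show that $\cY\oplus\cZ\oplus\cW=\mathfrak{c}$ is an ideal, using the Jacobi identity together with the property $[\fg,y]\subseteq\cZ$ secured in the first step. Given $u\in\fg$ and $c\in\mathfrak{c}$, Jacobi gives $[y,[u,c]]=[[y,u],c]+[u,[y,c]]$; the second summand vanishes because $[y,c]=0$, while the first vanishes because $[y,u]\in\langle z\rangle$ is central. Hence $[u,c]\in\ker(ad_y)=\mathfrak{c}$, so $\mathfrak{c}$ is an ideal, completing the decomposition.

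I expect the only genuinely substantive step to be the first one — producing the non-central $y$ with $[\fg,y]\subseteq\cZ$. This is precisely the statement that the second center of $\fg$ strictly contains the first, which is where nilpotency is indispensable (it can fail outright for non-nilpotent Lie algebras). Once $y$ is in hand, every subsequent step is linear algebra plus a single application of the Jacobi identity, and the construction is essentially forced.
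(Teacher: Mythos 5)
Your proof is correct and complete. Note, however, that the paper does not prove this statement at all: it is quoted verbatim as Lemma 4.1 of Kirillov's 1962 paper, so there is no in-paper argument to compare against. Your argument is in fact the standard proof of Kirillov's lemma: nilpotency enters exactly once, to produce a non-central $y$ with $[\fg,y]\subseteq\cZ$ (equivalently, the second term of the upper central series strictly contains the first); then $ad_y$ has rank one, its kernel $\mathfrak{c}$ is a codimension-one subspace containing $y$ and $z$ but not the chosen $x$ with $[x,y]=z$, and a linear complement $\cW$ of $\langle y,z\rangle$ inside $\mathfrak{c}$ finishes the decomposition, with the Jacobi identity showing $\mathfrak{c}=\cY\oplus\cZ\oplus\cW$ is an ideal. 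Every step checks out, including the Jacobi computation $[y,[u,c]]=[[y,u],c]+[u,[y,c]]=0$. One cosmetic quibble: in the Abelian case the statement is not so much ``vacuous'' as inapplicable (a one-dimensional algebra has no room for $\cX$ and $\cY$), so strictly speaking the lemma carries an implicit non-Abelian (equivalently $\dim\fg\geq 3$) hypothesis, which is how the paper uses it.
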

The Kirillov Lemma has many important consequences in the theory of nilpotent groups. 
For instance, it implies that every nilpotent Lie algebra with a one-dimensional center 
contains a subalgebra isomorphic to $\mathfrak{h}_3$. 
We apply the Kirillov Lemma in conjunction with Theorem~\ref{Thm C}(i) to obtain the following result.
\begin{prop}\label{nilpotent main theorem}
Suppose $N$ is a non-Abelian nilpotent Lie group with totally left-invariant quasi-Einstein metric. Then 
\begin{enumerate}[i)]
    \item the Lie algebra $\fn$ of $N$ has the following structure:
 $\fn=\fz(\fn)\oplus\cX\oplus \cY\oplus  \cW$ and it admits an orthonormal basis $x,y,z,w_1,..,w_t$ such that $x,y,z$ spans $\cX,\cY,\fz(\fn)$ respectively and $w_1,...,w_t$ spans $\cW.$ Moreover,
\begin{equation*}
 [x,y]=c z, \,\,c\in \bR,
\end{equation*}
\begin{equation*}
   [w_i,w_j]\in \cW\oplus \fz(\fn) \hspace{2em} \forall\, 1\leq i,j \leq t  
\end{equation*}
and all other Lie brackets between the basis vectors are zero except for the anti-symmetric relations.
\item $\lambda=-\frac{1}{2}c^2.$
\end{enumerate}
\end{prop}
\begin{proof}
First we prove that $N$ admits an orthonormal basis that satisfies the properties of Kirillov Lemma. Since $N$ is totally left-invariant quasi-Einstein, it has one-dimensional center $\cZ$. So, by Kirillov lemma, there exists $x,y,z\in \fn$ such that $[x,y]=z,$ and there exist a vector space decomposition $\fn=\cX + \cY+\cZ+ \cW$ as in the Kirillov Lemma. 

Let $\cZ\oplus \cZ^{\perp}$ be an orthogonal decomposition of $\fn$ with respect to the \(m\)-quasi-Einstein metric $g.$ Suppose $\cZ^{\perp}=\Bar{\cX}+\Bar{\cY}+\Bar{\cW}$ is spanned by the vectors \[\Bar{x}=x+az,\quad\Bar{y}=y+bz,\quad \Bar{w_i}=w_i+c_i z\] 
where $x\in \cX, y\in \cY,$ and $w_i\in\cW$ for every $1\leq i\leq t.$ Since $z$ is in the center of $\fn,$ it is easy to check that $[\Bar{x},\Bar{y}]=z,[\Bar{y},\Bar{w_i}]=0$ for all $i,$ and $\cZ+\Bar{\cY}+\Bar{\cW}$ is an ideal of $\fn,$ which are all the properties of Kirillov decomposition. So we can assume that $\Bar{\cX}=\cX,\Bar{\cY}=\cY$ and $\Bar{w_i}=w_i$ for all $i.$ Now in $\cY+\cW,$ pick $\Bar{\Bar{\cW}}$ such that it is the orthogonal complement of $\cY.$ Let $\Bar{\Bar{\cW}}$ be spanned by the orthonormal basis $\Bar{\Bar{w_i}}= w_i + d_i y$ where $1\leq i\leq t.$ Then $[y,\Bar{\Bar{w_i}}]=0$ since $[y,w_i]=0$ by Kirillov Lemma. Now, pick $\Bar{\Bar{\cX}}$ such that it is orthogonal to $\cY+\cW$ inside $\cX+\cY+\cW.$ Then $\Bar{\Bar{\cX}}$ is spanned by some vector $\Bar{\Bar{x}}=x+ey+fw,$ where $w\in \cW.$ Then $[\Bar{\Bar{x}},y]=z.$ Also, it is easy to check that $ \cY+ \cZ +\Bar{\Bar{\cW}} $ is an ideal of $\fn.$ So the decomposition $\fn= \Bar{\Bar{\cX}}+ \cY+\cZ + \Bar{\Bar{\cW}} $ is orthogonal and the orthogonal basis $\{\Bar{\Bar{x}}, y,z, \Bar{\Bar{w_1}},...,\Bar{\Bar{w_t}}\}$ spans $\fn.$

Now, let $\{x,y,z,w_1,...,w_t\}$ be an orthonormal basis of $\fn$ as in the Kirillov Lemma, then we have that $[x,y]=cz$ for some $c\in\bR$. From the Kirillov Lemma we have that $\cY\oplus \cZ\oplus \cW$ is an ideal, so $[u,v]\notin \cX$ for any $u,v\in \cY\oplus \cZ\oplus \cW$ and also $[x,u]\in \cY\oplus \cZ\oplus \cW$ for any $x\in \cX,u\in \fn,$ hence $\cX\cap [\fn,\fn]=0.$ So from the Ricci curvature formula \eqref{nilpotent ricci formula}, the positive term in $\Ric(x,x)$ is zero, and hence
\begin{align}
\Ric(x,x) &= -\frac{1}{2}g([y,x],z)^2-\frac{1}{2}\sum_{i=1}^t\|[x,w_i]\|^2
\label{eqn U} \\
\Ric(y,y) &= -\frac{1}{2}g([y,x],z)^2+\frac{1}{2}\sum_{i=1}^t g([x,w_i],y)^2
+\frac{1}{4}\sum_{i,j=1}^t g([w_i,w_j],y)^2
\label{eqn P}
\end{align}
But $(N,g,X)$ is totally left-invariant quasi-Einstein. So Theorem \ref{Thm C}$\,i)$ implies that $\Ric{(x,x)}=\Ric{(y,y)}=\Ric{(w_i,w_i)}$ for every $1\leq i\leq t.$ So, equating (\ref{eqn U}) and (\ref{eqn P}) we get
\begin{gather*}
    ||[x,w_i]||^2=g([x,w_i],y)^2=g([w_i,w_j],y)^2=0
\end{gather*}
 for every $1\leq i,j\leq t.$ Hence
$[\cX,\cW]=0$ and $[\cW,\cW]\subset \cW+\fz(\fn),$ proves $i).$ Also,
\begin{equation*}
    \lambda=\Ric(x,x)=-\frac{1}{2}g([y,x],z)^2=-\frac{1}{2}c^2
\end{equation*}
proves $ii).$
\end{proof}
Using this, we can now obtain a complete classification of totally left-invariant quasi-Einstein nilpotent Lie groups.
\begin{thm}\label{classification}
The Heisenberg Lie group is the only non-Abelian nilpotent Lie group that admits a totally left-invariant quasi-Einstein metric, up to isomorphism.  
\end{thm}
\begin{proof}
We prove the claim by induction on the dimension of the Lie algebra. 
Let $(\fn,g)$ be the metric Lie algebra of a non-Abelian nilpotent Lie group that admits a totally left-invariant quasi-Einstein metric. 
If $\dim(\fn)\leq 3$, then $\fn$ must be $\fh_3$ since Heisenberg is the only non-Abelian nilpotent Lie group in $3$-dimension. Assume that the claim holds when $\dim(\fn)\leq k$.

Let $(N',g,X)$ be a nilpotent Lie group admitting a totally left-invariant quasi-Einstein metric, with Lie algebra $\fn'$ and $\dim(\fn')=k+2$. 
By Proposition~\ref{nilpotent main theorem}, $\fn'$ admits an orthonormal basis $\{x,y,z,w_1,\dots,w_{k-1}\}$ where $\{w_i\}$ spans a subspace $\cW$ and
\begin{equation*}
    [x,y]=cz,\quad [w_i,w_j]\in \cW \oplus \fz(\fn'),
\end{equation*}
with all other Lie brackets between basis vectors equal to zero, except the anti-symmetric relations.
Since $(N',g)$ admits a totally left-invariant quasi-Einstein metric, by Theorem \ref{Thm C}.(i) we have 
\begin{align}
\Ric(x,x)&=\Ric(y,y)=-\tfrac{1}{2}c^2=\lambda, \nonumber\\
\Ric(w_k,w_k)
&=-\tfrac{1}{2}\sum_{i,j} g([w_k,w_i],w_j)^2
  -\tfrac{1}{2}\sum_i g([w_k,w_i],z)^2  \nonumber\\
&\quad+\tfrac{1}{4}\sum_{i,j} g([w_i,w_j],w_k)^2
=\lambda, \label{eqn Z1}\\
\Ric(z,w)
&=\tfrac{1}{4}\sum_{i,j} g([w_i,w_j],z)\, g([w_i,w_j],w)=0,\\
\Ric(w_k,w_l)
&=-\tfrac{1}{2}\sum_{i,j} g([w_k,w_i],w_j)\, g([w_l,w_i],w_j) \nonumber\\
&\quad-\tfrac{1}{2}\sum_i g([w_k,w_i],z)\, g([w_l,w_i],z) \nonumber\\
&\quad+\tfrac{1}{4}\sum_{i,j} g([w_i,w_j],w_k)\, g([w_i,w_j],w_l)
=0. \nonumber
\end{align}
Moreover,
\begin{align}\label{eqn Z2} 
\Ric(z,z)&=\frac{1}{2}c^2 + \frac{1}{4}\displaystyle\sum_{i,j}g([w_i,w_j],z)^2 = \lambda + \frac{\alpha ^2}{m}.\nonumber\\ \end{align} 
where $\alpha\in\bR$ such that $X=\alpha z.$ Note that $x$ and $y$ do not appear in the Ricci computation of $\Ric(w_k,w_l)$ since $[x,w]=[y,w]=0$ and $g([x,y],w)=0$ for all $w\in\cW$.

Now, consider the subalgebra $\fn=\cW \oplus \fz(\fn')$ of $\fn'$ spanned by the orthonormal basis $\{z,w_1,\dots,w_{k-1}\}.$ 
Let $(N,g|_{N})$ be the Lie group with Lie algebra $\fn.$ 

Since any subalgebra of a nilpotent Lie algebra is nilpotent (see \cite[Prop. 1.10]{MR1920389}), $\fn$ is nilpotent. 
Also, $[x,w]=[y,w]=0$ for all $w\in \cW$ implies that $\fn$ has the same center as $\fn'$, and hence $\fz(\fn')=\bR z$. 
Let $\Ric^{\fn}$ denote the Ricci curvature of $(N,g|_{N})$. 
Comparing with equations (\ref{eqn Z1})-(\ref{eqn Z2}), we see that for all $i,j$
\begin{equation}
    \Ric^{\fn}(w_i,w_j)=\Ric (w_i,w_j) =\delta_{ij}\lambda,\qquad  \Ric^{\fn}(z,w)=\Ric (z,w)=0, 
\end{equation}
and 
\begin{equation}\label{eqn Z6}
    \Ric^{\fn}(z,z)= \tfrac{1}{4}\displaystyle\sum_{i,j}g([w_i,w_j],z)^2 = \frac{\alpha ^2}{m}-c^2,
\end{equation}
where the last equality is from (\ref{eqn Z2}). The above equations show that $\Ric^{\fn}(u,v)=\Ric(u,v)$ for all $u,v\perp z.$ Let
\[m'=\frac{2m\alpha^2 }{2\alpha^2 -mc^2},\] then, using (\ref{eqn Z6}) we get,
\[\Ric^{\fn}(z,z)-\frac{\alpha^2}{m'}=  \lambda.\]
Let $X=\alpha z,$ then $(N,g|_N,X)$ satisfies 
\[\Ric^{\fn}-\tfrac{1}{m'}X^*\otimes X^*=\lambda g.\]
So $\fn$ admits a totally left invariant quasi-Einstein metric. Note that in the expression of $m',$ the denominator \[2\alpha^2 -mc^2\neq 0.\]  
Otherwise, we would have $\tfrac{\alpha^2}{m} = \tfrac{1}{2}c^2$, and by (\ref{eqn Z6}) it follows that 
\[
\Ric^{\fn}(z,z) = \lambda,
\]
which implies $\Ric^{\fn} = \lambda g$. Hence, $\fn$ would be Einstein, but then $\fn$ must be abelian by \cite{DottiMiatello1982}, forcing $\lambda = 0$. By Lemma~\ref{zero eigenvalue}, this would imply that $\fn'$ is Abelian, a contradiction. 

Since $\dim(\fn)=k$, the inductive hypothesis applies. 
Therefore, $\fn$ is isomorphic to the Heisenberg Lie algebra $\fh_{k}$. 
In particular, $[\fn,\fn]=\fz(\fn)=\bR z$, as in the Heisenberg case, and hence
\[[w_i,w_j]= c_{ij} z\] 
for some $c_{ij}\in \bR.$ 
 Hence, $\fn'$ has the Lie brackets \[[x,y]=cz, \qquad[w_i,w_j]= c_{ij} z,\] hence it is a two-step nilpotent Lie algebra with one-dimensional center. So, by Remark \ref{rmk Z8}, $\fn'$ is isomorphic to $\fh_{k+2}.$
\end{proof}
 Based on our discussion in section \ref{sec 2.2}, to obtain a classification of \(m\)-quasi-Einstein compact nilmanifolds, we need to look at the nilpotent groups that admit lattice. This is described by the following well-known theorem of Malcev \cite[Theorem 7]{MR39734} (also see \cite[Theorem 2.12]{Rag72} ):
\begin{thm}\label{rational}
 Let $N$ be a simply connected nilpotent Lie group and let $\fn$ be its Lie algebra. Then $N$ admits a lattice if and only if it admits a basis with respect to which the structure constants are rational.
\end{thm}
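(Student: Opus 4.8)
The plan is to work in exponential coordinates and induct on $\dim N$ via the central series. Since $N$ is simply connected and nilpotent, the exponential map $\exp\colon \fn \to N$ is a diffeomorphism with inverse $\log$, and the group law pulls back to the Baker--Campbell--Hausdorff product
\[
X * Y = X + Y + \tfrac{1}{2}[X,Y] + \cdots,
\]
which is a \emph{finite} sum of iterated brackets (the series terminates because $\fn$ is nilpotent) whose coefficients are rational. This single fact is what links the group structure to the bracket and drives both implications.

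For the implication (rational structure constants $\Rightarrow$ lattice), I would first pass to a rational form: if $\{X_1,\dots,X_n\}$ is a basis with rational structure constants, then the $\bQ$-span $\fn_\bQ$ of $\{X_1,\dots,X_n\}$ is a $\bQ$-Lie algebra with $\fn = \fn_\bQ \otimes_\bQ \bR$. Choosing the basis adapted to the descending central series (a strong Malcev basis) and rescaling to clear denominators, I can arrange a $\bZ$-module $L \subset \fn_\bQ$ with $[L,L] \subseteq L$. I would then show that the subgroup $\Gamma$ generated by $\{\exp X_i\}$ is a lattice, arguing by induction on the central series: writing $N$ as a central extension $1 \to Z \to N \to N/Z \to 1$, the image of $\Gamma$ is a lattice in $N/Z$ by induction, while $\Gamma \cap Z$ is a lattice in $Z \cong \bR^k$ because the BCH product restricted to the rational lattice lands back in the lattice. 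The nilpotency makes all of the BCH bookkeeping finite, so discreteness and cocompactness follow.

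For the converse (lattice $\Rightarrow$ rational), given a lattice $\Gamma$ I would produce a Malcev basis $\gamma_1,\dots,\gamma_n$ of $\Gamma$ compatible with the central series, so that $\{\log\gamma_i\}$ is an $\bR$-basis of $\fn$ and every $\gamma \in \Gamma$ is $\exp$ of an integer combination modulo higher central terms. The point is then to show that, with respect to this basis, the bracket has rational coefficients. This follows by inverting the BCH relation: the group commutator $\gamma_i\gamma_j\gamma_i^{-1}\gamma_j^{-1}$ corresponds under $\log$ to $[\log\gamma_i,\log\gamma_j]$ plus higher-order brackets with rational coefficients, and since the commutator again lies in $\Gamma$, its coordinates are integers. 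Solving these polynomial relations, which are triangular with respect to the central-series filtration, from the top of the series downward expresses each bracket $[\log\gamma_i,\log\gamma_j]$ as a rational combination of the $\log\gamma_k$, yielding rational structure constants.

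The main obstacle I anticipate is the inductive step in the forward direction: verifying that $\Gamma$ is genuinely \emph{discrete and cocompact}, not merely a subgroup. This is where one must combine the central-extension induction with a careful choice of strong Malcev basis so that the coordinates of $\Gamma$ form exactly $\bZ^n$ in suitable global coordinates (coordinates of the second kind); cocompactness then reduces to the corresponding statement for the abelian quotients appearing in the central series. The bracket-closedness of the scaled lattice $L$ and the finiteness of the BCH series are the technical facts that make this induction go through.
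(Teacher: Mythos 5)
The paper contains no proof of this statement to compare against: it is Malcev's classical criterion, quoted verbatim from \cite[Theorem 7]{MR39734} (see also \cite[Theorem 2.12]{Rag72}), and the paper simply cites it. Your outline is essentially the standard Malcev--Raghunathan argument from that cited source — exponential coordinates, the finite Baker--Campbell--Hausdorff product, and induction along the central series — so the only question is whether your sketch is complete as a proof, and it is not quite.

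There are two gaps beyond the one you flag. First, in the forward direction, bracket-closedness $[L,L]\subseteq L$ of the rescaled $\bZ$-module is \emph{not} the right condition to make ``the BCH product land back in the lattice'': the BCH coefficients have denominators that $L$ does not absorb. Concretely, in the Heisenberg algebra with $[X,Y]=Z$ and $L=\bZ X\oplus\bZ Y\oplus\bZ Z$ one has $[L,L]\subseteq L$, yet $X*Y=X+Y+\tfrac{1}{2}Z\notin L$, so $\exp(L)$ is not a subgroup and your stated justification for $\Gamma\cap Z$ being a lattice fails exactly here. (The conclusion survives — the group generated by $\exp X,\exp Y$ is still a lattice — but for the reason you give one must rescale more aggressively, arranging $[L,L]\subseteq D\,L$ for an integer $D$ divisible by all BCH denominators up to the nilpotency class; then a bracket of depth $j$ lies in $D^{j}L$ and every BCH term is integral. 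For Heisenberg, doubling the basis already works.) Second, in the converse direction you start from ``a Malcev basis $\gamma_1,\dots,\gamma_n$ of $\Gamma$ compatible with the central series, so that $\{\log\gamma_i\}$ is an $\bR$-basis of $\fn$.'' The existence of such a basis — equivalently, the facts that $\Gamma$ meets the center and each term of the central series in a lattice, and that $\log\Gamma$ spans $\fn$ over $\bR$ — is the actual hard content of this direction; it is what Raghunathan establishes first, by induction on central extensions, and it cannot be taken as an input. Once those structural facts are granted, your downward induction expressing each $[\log\gamma_i,\log\gamma_j]$ as a rational combination of the $\log\gamma_k$ is correct. So the architecture is the right one, but the two steps above, not only the discreteness/cocompactness issue you identify, are where the proof actually lives.
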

 The Heisenberg Lie groups admit lattices, since they possess a basis with rational structure constants. For example, the subgroup of matrices in the Heisenberg group with integer entries is a lattice in the Heisenberg Lie group. Therefore, the classification theorem we proved implies the following.
 \begin{thm}\label{thm A1}
The only compact nilmanifolds that admit invariant $m$-quasi-Einstein metrics are of the form $\Gamma \backslash H_n$, with $n \geq 3$.
\end{thm}
\begin{proof}
Suppose  $\Gamma \backslash N$ is  a compact nilmanifold that is invariant $m$-quasi-Einstein manifold, then Theorem \ref{thm A} implies that $N$ admits a totally left-invariant quasi-Einstein metric. Hence $N$ has to be Heisenberg Lie group.
\end{proof}
An $m$-quasi-Einstein manifold $(M,g,X)$, where $M$ is closed and $m = 2$, is called a \emph{near-horizon geometry} (see \cite{Kunduri2013-lj}). So, if a compact nilmanifold admits a $2$-quasi-Einstein metric, then it must be a near-horizon geometry. Combining this with Theorem \ref{thm A1}, we obtain:
\begin{cor}
  The only near-horizon geometries on a compact nilmanifold are $\Gamma \backslash H_{n}$, where $n \geq 3.$     
\end{cor}
This result appears to be new even in the case of near horizon geometries.

\section{Structure of quasi-Einstein unimodular solvable Lie groups}
We now turn to the study of quasi-Einstein solutions in a bigger class of Lie groups: the solvable Lie groups.

We consider a Lie group $(S,g)$ to be unimodular solvable with left-invariant metric $g$ and Lie algebra $\fs.$ Due to Theorem \ref{Lim}, our first objective is to identify the Lie subalgebra of Killing fields in $\fs.$ It turns out that if we assume that $S$ is totally left-invariant quasi-Einstein, then its Lie algebra of left-invariant Killing fields coincides precisely with $\fz(\fs),$ the center of $\fs.$ In contrast to the nilpotent case, this result for solvable Lie groups requires additional effort. In particular, we will make use of the Ricci curvature formula of $S.$

Since $\fs$ is solvable, the commutator $[\fs,\fs]$ is a nilpotent subalgebra of $\fs.$ Consider the orthogonal decomposition $\fs=[\fs,\fs]\oplus[\fs,\fs]^{\perp}.$ Given an orthonormal basis $e_1,...,e_n$ of $\fs,$  from \cite[Equation 3.1]{DottiMiatello1982} we have the following Ricci curvature formulae for a unimodular solvable Lie algebra:
\begin{align}
    \Ric(h,h) &= -\frac{1}{2}\tr(\ad_h^t \ad_h) + \frac{1}{2}\sum_{i<j} g([e_i,e_j], h)^2  \nonumber \\
    \Ric(f,f) &= -\frac{1}{4} \tr(\ad_f + \ad_f^t)^2,\label{ricci}  \\
    \Ric(h+f,h+f) &= \Ric(h,h) + \Ric(f,f) - \tr(\ad_h^t \ad_f), \nonumber
\end{align}
where $h\in [\fs,\fs]$ and $f\in [\fs,\fs]^{\perp}.$ We will also need \textit{Cartan's criterion}, which says that in a solvable Lie algebra, $\tr(\ad_h \ad_x)=0$ for any $h\in [\fs,\fs], x\in \fs$.

Using this, we prove a couple of Lemmas.
\begin{lemma}\label{Lemma U}
Let $S$ be a unimodular solvable Lie group with left-invariant metric $g$ and Lie algebra $\fs$. If $z\in \fz(\fs),$ then $\Ric{(z,z)}\geq 0.$   
\end{lemma}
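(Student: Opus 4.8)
The plan is to decompose $z$ along the orthogonal splitting $\fs = [\fs,\fs] \oplus [\fs,\fs]^{\perp}$ and exploit the fact that centrality of $z$ forces $ad_z = 0$. Write $z = h + f$ with $h \in [\fs,\fs]$ and $f \in [\fs,\fs]^{\perp}$. Since $z \in \fz(\fs)$, we have $ad_z = ad_h + ad_f = 0$, hence $ad_f = -ad_h$ and, taking transposes, $ad_f^t = -ad_h^t$. The strategy is to feed this one relation into the three Ricci formulae displayed above so that every contribution from $f$ is re-expressed through $h$, and then watch the terms telescope.

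Concretely, I would start from the cross formula $\Ric(z,z) = \Ric(h,h) + \Ric(f,f) - \tr{(ad_h^t ad_f)}$. Substituting $ad_f = -ad_h$ turns $\Ric(f,f) = -\frac14 \tr{(ad_f + ad_f^t)^2}$ into $-\frac14\tr{(ad_h + ad_h^t)^2}$ (the sign is absorbed by the square), while the cross term becomes $-\tr{(ad_h^t(-ad_h))} = +\tr{(ad_h^t ad_h)}$. Combining this with the first formula $\Ric(h,h) = -\frac12\tr{(ad_h^t ad_h)} + \frac12\sum_{i<j} g([e_i,e_j],h)^2$, the whole expression collapses to
\[
\Ric(z,z) = \tfrac12\tr{(ad_h^t ad_h)} + \tfrac12\sum_{i<j} g([e_i,e_j],h)^2 - \tfrac14\tr{(ad_h + ad_h^t)^2}.
\]

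The decisive step is Cartan's criterion. Expanding $\tr{(ad_h + ad_h^t)^2} = 2\tr{(ad_h^2)} + 2\tr{(ad_h^t ad_h)}$ (using that transposition preserves trace and that $\tr{(ad_h ad_h^t)} = \tr{(ad_h^t ad_h)}$ by cyclicity), and recalling $h \in [\fs,\fs]$, Cartan's criterion applied with $x = h$ yields $\tr{(ad_h^2)} = \tr{(ad_h\, ad_h)} = 0$. Therefore $-\frac14\tr{(ad_h + ad_h^t)^2} = -\frac12\tr{(ad_h^t ad_h)}$, which exactly cancels the first term and leaves $\Ric(z,z) = \frac12\sum_{i<j} g([e_i,e_j],h)^2 \geq 0$. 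The main obstacle is purely bookkeeping: keeping track of the sign flips induced by $ad_f = -ad_h$ and invoking Cartan's criterion at the right moment to annihilate $\tr{(ad_h^2)}$; once that term is gone, the cancellation is forced and nonnegativity is manifest.
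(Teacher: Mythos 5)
Your proof is correct and follows essentially the same route as the paper: decompose $z=h+f$ along $\fs=[\fs,\fs]\oplus[\fs,\fs]^{\perp}$, substitute the relation forced by $ad_z=0$ into the three Ricci formulae, kill $\tr{(ad_h^2)}$ via Cartan's criterion, and arrive at $\Ric(z,z)=\tfrac12\sum_{i<j}g([e_i,e_j],h)^2\geq 0$. The only (harmless) difference is bookkeeping: you substitute the full operator identity $ad_f=-ad_h$ at the outset, whereas the paper works with the symmetrized identity $ad_h+ad_h^t=-(ad_f+ad_f^t)$ and eliminates the mixed trace $\tr{(ad_h^t ad_f)}$ in a separate step.
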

\begin{proof}
Let \( z = h + f \), where \( h \in [\fs,\fs] \) and \( f \in [\fs,\fs]^{\perp} \). Since
\[
0 = \ad_z = \ad_{h+f} = \ad_h + \ad_f,
\]
it follows that
\[
\ad_h = -\ad_f.
\]
Taking transposes, we obtain
\begin{equation}\label{eqn A1}
\ad_h + \ad_h^t = -(\ad_f + \ad_f^t).
\end{equation}
Substituting \eqref{eqn A1} into \eqref{ricci}, and using that $\tr(\ad_h)^2=0$ due to Cartan's criterion, we obtain
\begin{equation}\label{eqn A3}
\Ric(f,f) = -\frac{1}{2}\tr(\ad_h \ad_h^t).
\end{equation}
 Compose (\ref{eqn A1}) with $\ad_h,$ take the trace and apply Cartan's criterion to get,
 \begin{equation}\label{eqn A2}
     \tr{\ad_h \ad^t_h}= -\tr{\ad_h \ad^t_f}.
 \end{equation}
Plugging (\ref{eqn A3}) and (\ref{eqn A2}) into (\ref{ricci}) gives,
\begin{equation*}
    \Ric{(h+f,h+f)}=\frac{1}{2}\sum_{i<j}g([e_i,e_j],h)^2
\end{equation*}
which is non-negative.
\end{proof}
\begin{lemma}\label{Lemma X}
 Let $\fs$ be a non-Abelian unimodular solvable Lie algebra with left-invariant metric $g$. If $X\in\fs $ is a Killing field, then $\Ric{(X,X)}\geq 0,$ where the equality holds when $X\in [\fs,\fs]^{\perp}.$
\end{lemma}
\begin{proof}
Since $X$ is Killing, Remark \ref{rmk A} implies that
\begin{equation}\label{Killing equation}
    \ad_X + \ad^t_X=0
\end{equation} on $\fs$.
Decompose $\fs$ as an orthogonal direct sum of subspaces  
\begin{gather*}
    \fs=[\fs,\fs]\oplus[\fs,\fs]^{\perp}.
\end{gather*}
Let $\{e_1,...,e_n\}$ be an orthonormal basis of $\fs$. Assume $X=h+f$ for some $h\in [\fs,\fs ],f\in [\fs,\fs]^{\perp}$. From (\ref{Killing equation}), $\tr{(\ad_X + \ad^t_X)^2}=0.$ Expanding this, and canceling terms by applying Cartan's criterion we get
\begin{equation}\label{eqn W}
    \tr(\ad_f)^2+\tr{\ad_h \ad^t_h}+\tr{\ad_f \ad^t_f}+ 2 \tr \ad_h \ad^t_f=0.
\end{equation}
But from (\ref{ricci}),
\begin{equation}\label{eqn Q}
    \Ric(X,X)=-\frac{1}{2}\tr{\ad^t_h \ad_h}+\frac{1}{2}\sum_{i<j}g([e_i,e_j],h)^2-\frac{1}{4}\tr{(\ad_f +\ad^t_f)^2}-\tr{\ad^t_h \ad_f}.
\end{equation}
But $\tr{(\ad_f +\ad^t_f)^2}=2\tr(\ad_f)^2+2\tr \ad_f\ad_f^t.$ Hence (\ref{eqn Q}) becomes 
\begin{equation}\label{eqn R}
\begin{aligned}
\Ric(X,X)={}&-\frac{1}{2}\tr(\ad^t_h \ad_h)
+\frac{1}{2}\sum_{i<j} g([e_i,e_j],h)^2
-\frac{1}{2}\tr(\ad_f)^2 \\
&-\frac{1}{2}\tr(\ad_f\ad_f^t)
-\tr(\ad^t_h \ad_f).
\end{aligned}
\end{equation}
Applying (\ref{eqn W}), this becomes
\begin{equation}\label{eqn S}
\Ric(X,X)= \frac{1}{2}\sum_{i<j}g([e_i,e_j],h)^2\geq 0. 
\end{equation} 
This right-hand side of the equality is zero either if $\fs$ is Abelian, or if $h=0.$
\end{proof}
We also require the following theorem regarding the curvature of solvable Lie groups with left-invariant metrics.
\begin{thm}\cite[Theorem 3.1]{MILNOR1976293}\label{thm T}
If the Lie group $S$ is solvable, then every left-invariant metric on $S$ is either flat, or else has strictly negative scalar curvature.   
\end{thm}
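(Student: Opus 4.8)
The plan is to prove this through Milnor's scalar curvature formula for a left-invariant metric and to single out the flat case as the only one where strict negativity fails. Fix a left-invariant orthonormal basis $e_1,\dots,e_n$ of $\fs$, write $\alpha_{ijk}=g([e_i,e_j],e_k)$ for the structure constants, and let $H$ be the mean curvature vector defined by $g(H,x)=\tr{(ad_x)}$ for all $x$. Summing the Ricci tensor over the basis yields the standard identity for the scalar curvature $\rho$,
\begin{equation*}
\rho=-\tfrac14\sum_{i,j,k}\alpha_{ijk}^2-\tfrac12\tr{(B)}-|H|^2,
\end{equation*}
where $\tr{(B)}=\sum_k\tr{(ad_{e_k}^2)}$ is the trace of the Killing form. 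Decomposing each $ad_{e_i}$ into its symmetric part $S_i$ and skew part $A_i$ and using $\tr{(S_iA_i)}=0$, together with $\sum_{i,j,k}\alpha_{ijk}^2=\sum_i(\|S_i\|^2+\|A_i\|^2)$ and $\tr{(B)}=\sum_i(\|S_i\|^2-\|A_i\|^2)$, this rewrites as
\begin{equation*}
\rho=-\tfrac34\sum_i\|S_i\|^2+\tfrac14\sum_i\|A_i\|^2-|H|^2 .
\end{equation*}
Thus the theorem is equivalent to the inequality $\sum_i\|A_i\|^2\le 3\sum_i\|S_i\|^2+4|H|^2$ together with the claim that equality forces flatness.

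The two summands $-\tfrac34\sum_i\|S_i\|^2$ and $-|H|^2$ are manifestly non-positive, so the entire content lies in controlling the skew contribution $\sum_i\|A_i\|^2=\tfrac12\sum_{i,j,k}\alpha_{ijk}^2-\tfrac12\tr{(B)}$. A large skew part corresponds to a very negative $\tr{(B)}$, i.e. to adjoint operators with dominant imaginary (rotational) spectrum. Here solvability enters decisively. By Cartan's criterion the Killing form vanishes on $[\fs,\fs]$, and by Lie's theorem the eigenvalues of each $ad_x$ are given by weights whose real and imaginary parts $\mu_\ell,\nu_\ell$ annihilate the commutator, so that $\tr{(B)}=\sum_\ell(\|\mu_\ell\|^2-\|\nu_\ell\|^2)$ and all rotational behaviour is confined to the action of $[\fs,\fs]^\perp$ on $\fs$. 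The heart of the argument is to bound $\sum_i\|A_i\|^2$ against $\sum_i\|S_i\|^2$ and $|H|^2$ using this flag structure, which is what Milnor carries out. In the unimodular case of primary interest in this paper, summing the Ricci formula (\ref{ricci}) over the decomposition $\fs=[\fs,\fs]\oplus[\fs,\fs]^\perp$ reproduces the same $S/A$ expression, so the same inequality must be established; Cartan's identity $\tr{(ad_h\, ad_x)}=0$ for $h\in[\fs,\fs]$ is precisely what makes the cross terms manageable.

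Finally, the equality analysis should pin down the flat case. Equality in $\sum_i\|A_i\|^2\le 3\sum_i\|S_i\|^2+4|H|^2$ ought to force $H=0$, each $ad_f$ with $f\in[\fs,\fs]^\perp$ to be skew-symmetric, and $[\fs,\fs]$ to be abelian and to act trivially on itself; equivalently, an orthogonal splitting $\fs=\fb\oplus\mathfrak{u}$ into an abelian subalgebra $\fb$ and an abelian ideal $\mathfrak{u}$ on which $\fb$ acts by skew-symmetric derivations. This is exactly Milnor's criterion for a left-invariant metric to be flat, so $\rho=0$ holds if and only if the metric is flat, and in every other solvable case the inequality is strict and $\rho<0$. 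As a consistency check, the Lie algebra of the group of Euclidean motions of the plane realizes the equality $\sum_i\|A_i\|^2=3\sum_i\|S_i\|^2$ with $H=0$, giving $\rho=0$, as it must since that group is flat.

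I expect the main obstacle to be the inequality $\sum_i\|A_i\|^2\le 3\sum_i\|S_i\|^2+4|H|^2$ itself. The skew parts $A_i$ are the only terms that push the scalar curvature upward, and since $ad$-operators of a solvable algebra can be highly non-self-adjoint (purely imaginary spectrum is allowed), the bound cannot be read off from signs alone. Extracting it requires genuinely exploiting the solvable flag—equivalently, that the weights furnished by Lie's theorem vanish on $[\fs,\fs]$—to show that rotational contributions are always accompanied by enough symmetric or mean-curvature contributions, with the two sides balancing exactly on the flat solvmanifolds.
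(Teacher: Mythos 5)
Your setup is correct and the paper offers nothing to compare it against: this statement is imported verbatim from Milnor with a citation, so there is no in-paper proof. Judged on its own, your first half checks out. The scalar curvature identity $\rho=-\tfrac14\sum_{i,j,k}\alpha_{ijk}^2-\tfrac12\tr{(B)}-|H|^2$ is valid (it gives $\rho=-\tfrac12$ on $\fh_3$, $\rho=0$ on the flat metric of the Euclidean motion group, and $\rho=-2$ on the hyperbolic plane), the passage to $\rho=-\tfrac34\sum_i\|S_i\|^2+\tfrac14\sum_i\|A_i\|^2-|H|^2$ is correct, and the theorem is indeed equivalent to the inequality $\sum_i\|A_i\|^2\le 3\sum_i\|S_i\|^2+4|H|^2$ together with the claim that equality occurs only in the flat case. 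Your consistency check on $E(2)$ is also accurate.

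The gap is that what follows is a reformulation, not a proof: both claims that constitute the theorem are deferred rather than established. First, the inequality itself --- which you correctly call the heart of the argument --- is never derived; you observe that solvability gives spectral control (by Cartan's criterion and Lie's theorem the weights vanish on $[\fs,\fs]$, so $ad_h$ is nilpotent for $h\in[\fs,\fs]$ and all rotational spectrum comes from $[\fs,\fs]^{\perp}$) and then write "which is what Milnor carries out." But this spectral information cannot by itself bound the Frobenius norms $\|A_i\|^2$: the skew part of a nilpotent operator is generically large (already on $\fh_3$ one has $\|A_i\|=\|S_i\|$ for the generators $x,y$), so eigenvalue data alone does not yield the inequality, and the step where solvability actually forces $\sum_i\|A_i\|^2\le 3\sum_i\|S_i\|^2+4|H|^2$ is simply absent. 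Second, the equality analysis --- that equality forces $H=0$, skew-symmetric $ad_f$ for $f\in[\fs,\fs]^{\perp}$, abelian commutator, and hence flatness via Milnor's splitting criterion --- is phrased as "ought to force" and again attributed to Milnor without argument. Your closing paragraph concedes that the main inequality remains the obstacle; as a result the proposal proves nothing beyond what is contained in the (correct) curvature formulas. It accurately identifies where the difficulty lies, but it does not resolve it, so it cannot stand as a proof of the statement.
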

Using this theorem and the previous lemma we prove:
\begin{lemma}\label{lemma Y}
Let $S$ be a non-flat unimodular solvable Lie group admitting a totally left-invariant quasi-Einstein metric. Then $\lambda \leq 0$, with equality if and only if $S$ is flat.
\end{lemma}
\begin{proof}
We will prove the flat case first. Suppose $\lambda=0.$ Assume that $\{e_1,..,e_n\}$ is an orthonormal basis of $\fs,$ and that $X=\alpha e_1.$ Since $\lambda=0,$ we have that $\Ric{(e_i,e_i)}=0$ for all $2\leq i\leq n.$ Since $X$ is Killing, Lemma \ref{Lemma X} implies that $\Ric{(X,X)}\geq 0.$ This implies that the scalar curvature of $S$ is non-negative. Hence Theorem \ref{thm T} implies that $S$ is flat. The other direction is immediate.

Now suppose that $\lambda > 0$. Then the previous argument implies that $S$ is non-flat. Again, by Lemma~\ref{Lemma X}, we have $\Ric(X,X) \geq 0$. Hence, $\Ric \geq 0$ on $S$, which contradicts the fact that $S$ is non-flat.
\end{proof}
We require one additional tool to prove that $X$ lies in the center. In \cite[p.~260]{DottiMiatello1982}, Dotti states the following result; for completeness, we include a proof.
\begin{lemma}\label{rmk dotti} 
Suppose $\Ric{}\leq 0$ on a unimodular solvable Lie group. Then, $\Ric{(h,h)}=0$ for every $h\in [\fs,\fs].$    
\end{lemma}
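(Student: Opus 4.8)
The plan is to use Theorem \ref{thm dotti} to pin down the bracket structure of $\fs$ under the hypothesis $\Ric{}\leq 0$, then to compute the partial trace of the Ricci form over $[\fs,\fs]$ and show it vanishes; since $\Ric{}\leq 0$, this will force the Ricci form to vanish on all of $[\fs,\fs]$.

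First I would invoke Theorem \ref{thm dotti}: because $\Ric{}\leq 0$, the subalgebra $[\fs,\fs]$ is abelian, $[\fs,\fs]^{\perp}$ is a commutative subalgebra, and $ad_a$ is normal for all $a\in[\fs,\fs]^{\perp}$. Fix orthonormal bases $h_1,\dots,h_q$ of $[\fs,\fs]$ and $f_1,\dots,f_r$ of $[\fs,\fs]^{\perp}$. Since $[\fs,\fs]$ is an abelian ideal and $[\fs,\fs]^{\perp}$ is a commutative subalgebra, we have $[h_a,h_{a'}]=0$ and $[f_b,f_{b'}]=0$, so the only possibly nonzero brackets among these basis vectors are the $[h_a,f_b]$, all of which lie in $[\fs,\fs]$. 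I would encode the structure constants in the array $T_{abc}:=g([h_a,f_b],h_c)$, noting that $g([h_a,f_b],f_l)=0$.

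Next I would evaluate $\Ric{(h_c,h_c)}$ using the first of the Ricci formulas in (\ref{ricci}). The term $\tr{(ad_{h_c}^t ad_{h_c})}=\sum_i\norm{[h_c,e_i]}^2$ only detects the brackets $[h_c,f_b]$ and equals $\sum_{b,j}T_{cbj}^2$, while the positive term $\sum_{i<j}g([e_i,e_j],h_c)^2$ only detects the pairs $(h_a,f_b)$ and equals $\sum_{a,b}T_{abc}^2$. Hence
\begin{equation*}
\Ric{(h_c,h_c)}=-\frac12\sum_{b,j}T_{cbj}^2+\frac12\sum_{a,b}T_{abc}^2 .
\end{equation*}
Summing over $c$, both the negative and the positive contributions become, after relabeling indices, the full sum $\sum_{a,b,c}T_{abc}^2$ of squared entries of $T$; they cancel, giving $\sum_c\Ric{(h_c,h_c)}=0$.

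Finally, $\sum_c\Ric{(h_c,h_c)}$ is precisely the trace of the restriction of the Ricci form to $[\fs,\fs]$, so this restriction has vanishing trace. Being the restriction of a negative semidefinite form it is itself negative semidefinite, and a negative semidefinite symmetric form of trace zero is identically zero; therefore $\Ric{(h,h)}=0$ for every $h\in[\fs,\fs]$. The main subtlety lies in the bookkeeping of the third step: one must verify that the abelianness of $[\fs,\fs]$ together with the commutativity of $[\fs,\fs]^{\perp}$ genuinely eliminates every other bracket term, so that the trace term and the positive term reduce to the \emph{same} sum over $T$. It is worth emphasizing that the individual quantities $\Ric{(h_c,h_c)}$ need not vanish on their own; only their sum does, and it is the negative semidefiniteness of $\Ric{}$ that upgrades the vanishing of the trace to the vanishing of the whole form on $[\fs,\fs]$.
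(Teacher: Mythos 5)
Your proposal is correct and follows essentially the same route as the paper: invoke Theorem \ref{thm dotti} to make $[\fs,\fs]$ and $[\fs,\fs]^{\perp}$ commutative, observe that the trace term and the positive term in the Ricci formula (\ref{ricci}) then reduce to the same sum of squared structure constants so that $\sum_c \Ric{(h_c,h_c)}=0$, and use $\Ric{}\leq 0$ to conclude the restriction vanishes. Your explicit bookkeeping with $T_{abc}$ and the remark that only the \emph{sum} of the $\Ric{(h_c,h_c)}$ vanishes a priori are just more detailed renderings of the paper's argument.
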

\begin{proof}
Denote by $h_1,...,h_q;f_1,...,f_r$ an orthonormal basis of $[\fs,\fs],[\fs,\fs]^{\perp}$ respectively.

Note that
\begin{equation*}
  \tr{\ad^t_{h_i} \ad_{h_i}}=\sum_{j,k}g([f_j,h_i],h_k)^2+\sum_{j,k}g([h_j,h_i],h_k)^2.   
\end{equation*} Hence
\begin{align*}
\sum_{i}\Ric(h_i,h_i)
&= -\frac{1}{2}\sum_{i} \tr(\ad^t_{h_i}\ad_{h_i})
   +\frac{1}{2}\sum_{i,j,k} g([f_k,h_j],h_i)^2  \\
&\quad +\frac{1}{2}\sum_{i,j,k} g([h_k,h_j],h_i)^2
   +\frac{1}{2}\sum_{i,j,k} g([f_k,f_j],h_i)^2 \\
&= \frac{1}{2}\sum_{i,j,k} g([f_k,f_j],h_i)^2\geq 0.
\end{align*}
This, together with the hypothesis $\Ric \leq 0$, forces 
$\sum_{i}\Ric(h_i,h_i)=0$, and hence $\Ric(h_i,h_i)=0$ for all $i.$
\end{proof}

Now we are ready to prove our theorem.

\begin{thm}\label{solvable killing}
Let $(S,g,X)$ be a non-flat unimodular solvable Lie group with totally left-invariant quasi-Einstein metric. Then $X$ is in $\fz(\fs)$ and  $\dim(\fz(\fs))=1.$ 
\end{thm}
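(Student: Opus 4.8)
The plan is to prove the two assertions in sequence: first that $X\in\fz(\fs)$, and then that $\dim(\fz(\fs))=1$. Both arguments will lean on the quasi-Einstein equation combined with the sign constraints on Ricci curvature established in Lemmas~\ref{Lemma U}, \ref{Lemma X}, and~\ref{rmk dotti}, together with the fact that $\lambda<0$ from Lemma~\ref{lemma Y} and that $X$ is Killing from Theorem~\ref{Lim}. The key structural input is that the quasi-Einstein condition forces $\Ric$ to have a very rigid spectrum: on the orthogonal complement of $X$ the Ricci tensor equals $\lambda g$, which is strictly negative, while along $X$ it is $\Ric(X,X)=(\lambda+\tfrac{\alpha^2}{m})g(X,X)$ for $X=\alpha e_1$. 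So away from $X$ all Ricci curvatures are negative, and I want to leverage the non-negativity results to pin down where $X$ must live.

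First I would show $X\in\fz(\fs)$. Since $X$ is Killing, decompose $X=h+f$ with $h\in[\fs,\fs]$ and $f\in[\fs,\fs]^\perp$; Lemma~\ref{Lemma X} gives $\Ric(X,X)=\tfrac12\sum_{i<j}g([e_i,e_j],h)^2\ge 0$, with equality exactly when $h=0$. The strategy is to argue that every direction orthogonal to $X$ has strictly negative Ricci curvature (namely $\lambda<0$), which should force $\Ric\le 0$ on all of $\fs$ once we understand the single positive direction. Concretely, because the quasi-Einstein equation makes $X$ a Ricci eigenvector and all of $X^\perp$ has eigenvalue $\lambda<0$, I would apply Theorem~\ref{thm dotti}: if $\Ric\le 0$ everywhere, then $[\fs,\fs]$ and $[\fs,\fs]^\perp$ are commutative subalgebras and each $ad_a$ is normal for $a\in[\fs,\fs]^\perp$. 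The subtlety is that $\Ric(X,X)$ could a priori be positive, so I must rule that out or handle it. The clean approach is: if $h\ne 0$ then $\Ric(X,X)>0$, but I claim this contradicts the eigenvalue structure. Using Lemma~\ref{rmk dotti} (once $\Ric\le 0$ is in force, every $h\in[\fs,\fs]$ has $\Ric(h,h)=0$), combined with the quasi-Einstein value $\lambda<0$ on $X^\perp$, the commutator directions orthogonal to $X$ would need Ricci value $\lambda<0$ while Lemma~\ref{rmk dotti} demands $0$ — a contradiction unless $[\fs,\fs]\subseteq\mathrm{span}(X)$. This pins $X$ into the commutator and then, via $ad_X+ad_X^t=0$ (Killing) together with Theorem~\ref{thm dotti}'s normality and commutativity, forces $ad_X=0$, i.e.\ $X\in\fz(\fs)$.

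Second I would prove $\dim(\fz(\fs))=1$. By Lemma~\ref{Lemma U} every central vector has $\Ric(z,z)\ge 0$. On the other hand, the quasi-Einstein spectrum says that any $z\in\fz(\fs)$ with $z\perp X$ satisfies $\Ric(z,z)=\lambda<0$, which is impossible. Hence $\fz(\fs)$ contains no nonzero vector orthogonal to $X$, so $\fz(\fs)=\mathrm{span}(X)$, which is one-dimensional (it is nonzero because $X\ne 0$ on a non-flat group, as noted after Lemma~\ref{lemma Y}, and $X\in\fz(\fs)$ by the first part). This mirrors exactly the nilpotent argument in Theorem~\ref{Thm C}$\,iii)$ but with Lemma~\ref{Lemma U} replacing Remark~\ref{formula}.

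The main obstacle I anticipate is the first step, establishing $\Ric\le 0$ globally so that Theorem~\ref{thm dotti} applies and then deducing $ad_X=0$. The difficulty is that the quasi-Einstein equation by itself only controls $X^\perp$ and leaves $\Ric(X,X)$ potentially positive; I must show the single potentially-positive direction cannot actually be positive while $X$ has a nontrivial commutator part. The resolution should come from carefully combining the Killing relation $ad_X+ad_X^t=0$, the explicit formula $\Ric(X,X)=\tfrac12\sum_{i<j}g([e_i,e_j],h)^2$ from Lemma~\ref{Lemma X}, and the rigidity that every $X^\perp$ direction has the same negative eigenvalue $\lambda$, so that the trace/scalar-curvature bookkeeping forces $h=0$ and hence the commutator to collapse into the center.
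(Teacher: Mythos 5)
Your second step (one-dimensionality of the center) is correct and is essentially the paper's argument: a central vector $z\perp X$ has $\Ric(z,z)\geq 0$ by Lemma \ref{Lemma U}, while the quasi-Einstein equation forces $\Ric(z,z)=\lambda<0$ (the paper routes the contradiction through Theorem \ref{thm T} instead of quoting Lemma \ref{lemma Y}, which is immaterial). The genuine gap is in your first and main step, showing $X\in\fz(\fs)$. Your strategy is to establish $\Ric\leq 0$ on all of $\fs$ so that Theorem \ref{thm dotti} and Lemma \ref{rmk dotti} become available. That inequality is \emph{false} for every non-flat totally left-invariant quasi-Einstein unimodular solvable group: the theorem's conclusion (and already the Heisenberg examples) show that $X$ spans the center, lies in $[\fs,\fs]$, and satisfies $\Ric(X,X)=\frac{1}{2}\sum_{i<j}g([e_i,e_j],X)^2>0$. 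Concretely, Lemma \ref{Lemma X} gives $\Ric(X,X)=0$ if and only if $h=0$, so $\Ric\leq 0$ can hold only in the branch $h=0$; but in that branch $X=f\perp[\fs,\fs]$, and Lemma \ref{rmk dotti} then contradicts $\lambda<0$ on the (nonzero) commutator. This branch is therefore a pure contradiction --- it is exactly how the paper eliminates the case $\nu:=\lambda+\tfrac{1}{m}|X|^2=0$ --- and it cannot end with your escape clause ``unless $[\fs,\fs]\subseteq\mathrm{span}(X)$,'' since there $X\perp[\fs,\fs]$. In the complementary branch $h\neq 0$, which is the one that actually occurs, $\Ric(X,X)>0$ and you are not entitled to invoke Theorem \ref{thm dotti} or Lemma \ref{rmk dotti} at all; your use of them there is circular. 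Your closing guess that the bookkeeping ``forces $h=0$'' is also backwards: the correct conclusion is $f=0$, i.e.\ $X=h\in[\fs,\fs]\cap\fz(\fs)$.

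What is missing is the trace identity that drives the paper's proof. From the Killing relation (\ref{Killing equation}), $ad_h+ad_h^t=-(ad_f+ad_f^t)$; composing with $ad_h$, taking traces, and applying Cartan's criterion gives
\begin{equation*}
\tr(ad_h^t ad_h)=-\tr(ad_f^t ad_h)=2\Ric(h,f),
\end{equation*}
the last equality coming from (\ref{ricci}). Substituting this and (\ref{eqn S}) back into (\ref{ricci}) yields $\Ric(h,h)=-\Ric(h,f)+\Ric(X,X)$, i.e.\ $\Ric(h,X)=\Ric(X,X)$, hence $\Ric(f,X)=0$. Since the quasi-Einstein equation makes $X$ a Ricci eigenvector with eigenvalue $\nu$, this reads $\nu|f|^2=0$. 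The case $\nu=0$ is the contradiction branch above, so $f=0$; then $\tr(ad_h^t ad_h)=2\Ric(h,f)=0$ forces $ad_X=ad_h=0$, i.e.\ $X\in\fz(\fs)$. Without this identity (or some substitute producing $\Ric(f,X)=0$), your proposal does not close the main case.
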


\begin{proof}
By Theorem \ref{Lim}, $X$ is Killing. So all the equations in the proof of Lemma \ref{Lemma X} hold. Assume $X=h+f$ for some $h\in [\fs,\fs ],f\in [\fs,\fs]^{\perp}$.
From equation (\ref{Killing equation}) we have that
\begin{equation*}
    \ad_h + \ad_h^t = -(\ad_f+\ad_f^t).
\end{equation*}
 Compose this with $\ad_h,$ take the trace, and apply Cartan’s criterion to get,
 \begin{equation}\label{2Ric(h,f)}
  \tr(\ad_h^t \ad_h) = -\tr(\ad_f^t  \ad_h)=2\Ric(h,f)   
 \end{equation}
 where last equality is from (\ref{ricci}). Putting (\ref{2Ric(h,f)}), and (\ref{eqn S}) in (\ref{ricci}) we get,
\begin{equation*}
  \Ric(h,h)= -\Ric(h,f)+\Ric{(X,X)}.
\end{equation*}
Bilinearity of Ricci tensor implies
\begin{equation}\label{Ric(h,X)}
 \Ric(h,X) = \Ric(X,X).    
\end{equation}
Since $S$ is totally left-invariant quasi-Einstein, for any $x\in \fs,$
\begin{equation*}
    \Ric(x,X)=\Big(\lambda+\frac{\abs{X}^2}{m}\Big)g(x,X).
\end{equation*}
This means that $X$ is an eigenvector of the Ricci tensor with eigenvalue  $\nu=\lambda+\frac{\abs{X}^2}{m}.$ Hence (\ref{Ric(h,X)}) implies that
\begin{equation*}
0=\Ric(X-h,X) = \Ric{(f,X)}=\nu g(f,X)=\nu\abs{f}^2.
\end{equation*}
This implies that either $ \nu=0$ or $f=0.$ If $\nu=0,$ Lemma \ref{Lemma X} implies that $X=f.$ Also, since $\lambda<0,$ we have that $\Ric{}\leq 0$ on $S.$ So Lemma \ref{rmk dotti} implies that $\Ric{(u,u)}=0$ for all $u\in [\fs,\fs].$ This contradicts $\lambda<0.$ So $f=0.$ Then (\ref{2Ric(h,f)}) implies that $ \tr(\ad_h^t \ad_h) = 0,$ hence $\ad_h=0$ which implies that $X=h\in \fz(\fs).$ Also, note that this implies that $X\in [\fs,\fs]$.

To prove that the center is one-dimensional, suppose $z\in \fz(\fs)$ such that $z$ is orthogonal to $X.$ By Lemma \ref{Lemma U}, we get that $\lambda=\Ric{(z,z)}\geq 0.$ Then Lemma \ref{Lemma X} implies that $\Ric\geq 0$ on $S,$ hence $S$ is flat by Theorem \ref{thm T}, which is a contradiction. So $z=0,$ so $\dim(\fz(\fs))=1.$
\end{proof}
 This is a good time to explore how Ricci solitons and \(m\)-quasi-Einstein metrics are closely related. Fix a left-invariant vector field $X$ of solvable Lie group $(S,g)$ with Lie algebra $\fs$ and $g$ left-invariant metric. We define the linear operator $F$ on $\fs$ such that $g(F(x),y)=\frac{1}{m}X^*\otimes X^*(x,y)$ for any $x,y\in\fs.$ Define $r$ to be the Ricci operator such that $\Ric{(x,y)}=g(r(x),y).$ Hence for convenience we rewrite the \(m\)-quasi Einstein  equation (\ref{main equation}):
 
A unimodular solvable Lie group $(S,g,X)$ is totally left-invariant quasi-Einstein if and only if 
\begin{equation}
 r=\lambda I+F,   
\end{equation}
where
\[F(x)= \begin{cases} 
      \frac{g(X,X)}{m}x & x\in \fz(\fs), \\
      0 & x\in \fz(\fs)^{\perp}. \\
  
   \end{cases}
\]
If $F$ were a derivation, then every totally left-invariant quasi-Einstein solvable Lie group would also be a Ricci soliton. However, this is never the case. To see this, suppose $(S,g,X)$ is a non-flat totally left-invariant quasi-Einstein solvable Lie group. Then $\fz(\fs)$ is one-dimensional, and hence $\fz(\fs)\subseteq [\fs,\fs]$. In particular, there exist vectors $x,y$ orthogonal to $z\in \fz(\fs)$ such that $[x,y]=z$. Since $F(x)=F(y)=0$ but $F([x,y])\neq 0$, we obtain a contradiction. Indeed, if $F$ were a derivation, then we would have
\[
F([x,y])=[F(x),y]+[x,F(y)]=0.
\]
Therefore, $F$ cannot be a derivation. This also rules out the use of techniques from geometric invariant theory as in \cite{lauret2010riccisolitonsolvmanifolds}.

We note that in the case of the Heisenberg Lie group $(H_{2s+1},g)$, the metric $g$ admits both an $m$-quasi-Einstein structure and a Ricci soliton structure. That is,
\[
\Ric = \lambda I + F,
\]
and
\[
\Ric = \lambda' I + D,
\]
where $\lambda'=(s+2)\lambda$ and $D=\diag(1,...,1,2)$. Hence,
\[
F=(s+1)\lambda I + D.
\]
Note that $D$ is a derivation, whereas the identity map is not a derivation of the Heisenberg Lie algebra. Thus it is consistent that $F$ itself is not a derivation.

While a complete classification of $m$-quasi-Einstein solvable Lie groups remains out of reach, we will provide one in a very special case.

For this, we first give the required preliminaries. Let $(S,g)$ be the unimodular solvable Lie group with Lie algebra $\fs.$ Consider the orthogonal decomposition $\fs=\fa\oplus \fn,$ where $\fn$ is the nilradical (maximal nilpotent ideal) of $\fs.$ 
Let $H$ be the mean curvature vector of $S$ which is the unique element such that $g(H,a)= \tr \ad_a$ for any $a\in \fa.$

Let $a_1,...,a_q,x_1,...,x_n$ be an orthogonal basis of $\fs$ where $a_i,x_i$ spans $\fa,\fn$ respectively and $a\in \fa, x\in \fn,$ then from \cite[Equation 25]{lauret2010riccisolitonsolvmanifolds},
\begin{align}
\Ric(a,a)
&= -\frac{1}{2}\sum_{i}\|[a,a_i]\|^2
   -\tr\!\left(S(\ad_a|_{\fn})^2\right),
\label{Ric(a,a)} \\[6pt]
\Ric(a,x)
&= -\frac{1}{2}\sum_{i} g([a,a_i],[x,a_i])
   -\frac{1}{2}\tr\!\left((\ad_a|_{\fn})^t\,\ad_x|_{\fn}\right)
   -\frac{1}{2}g([H,a],x),
\label{Ric(a,x)} \\[6pt]
\Ric(x,x)
&= \frac{1}{4}\sum_{i,j} g([a_i,a_j],x)^2
   +\frac{1}{2}\sum_{i} g([\ad_{a_i}|_{\fn},(\ad_{a_i}|_{\fn})^t](x),x)\label{Ric(x,x)} \\
&\quad
   -\frac{1}{2}\sum_{i,j} g([x,x_i],x_j)^2
   +\frac{1}{4}\sum_{i,j} g([x_i,x_j],x)^2
   -g([H,x],x).\nonumber
\end{align}
This Ricci formula is essentially the same as \eqref{ricci}, but written with respect to the decomposition $\fa \oplus \fn$, whereas \eqref{ricci} is expressed using the splitting $[\fs,\fs] \oplus [\fs,\fs]^{\perp}$.
\begin{rmk}\label{rmk H}
Since $[\fs,\fs]\subseteq\fn,$ 
\begin{equation*}
 g(\ad_a^t (a_i),y)=g(a_i,[a,y])=0\quad \forall\, a,a_i\in \fa,y\in \fs.   
\end{equation*}
 This implies
\begin{equation*}
    \ad_a^t (a_i)=0 \hspace{2em}\forall\, 1\leq i\leq q.
\end{equation*}
\end{rmk}
We also use the following \cite[Lemma 4.7]{lauret2010riccisolitonsolvmanifolds}:
\begin{lemma}\label{Lauret lemma}
Let $S$ be a solvable Lie group with a left-invariant metric. Then, for any $a\in \fa,$ the following conditions are equivalent:
\begin{enumerate}[a)]
    \item $\ad_a^t$ is a derivation of $\fs.$
    \item $\ad_a$ is a normal operator (i.e., $[\ad_a,\ad_a^t]=0$).
\end{enumerate}

\end{lemma}
    Now we are ready to prove the structure theorem.
\begin{thm}\label{main solvable thm}
 Let $S$ be a non-flat unimodular solvable Lie group with left-invariant metric $g$ and Lie algebra $\fs=\fa\oplus \fn$ where $\fn$ is the nilradical of $\fs$ and suppose $\ad_a$ is normal for all $a\in\fa.$ Then $(S,g,X)$ is totally left-invariant quasi-Einstein with quasi-Einstein constant $\lambda$ if and only if the following conditions are satisfied:
 \begin{enumerate}[i)]
     \item $(N,g_1,X)$ is totally left-invariant quasi-Einstein with quasi-Einstein constant $\lambda$ where $N$ is any nilpotent Lie group with the Lie algebra $\fn$ and $g_1=g|_{\fn\times \fn},$
     \item $\fs$ is standard (i.e., $[\fa,\fa]=0$),
     \item $\fz(\fs)=\fz(\fn),$

 \item $g(a,a)=-\frac{1}{\lambda}\tr{S(\ad_a)^2}.$
 \end{enumerate}
\end{thm}
\begin{proof}
$(\Longleftarrow)$ We will prove that $r=\lambda I+ F$ on $S$. Let $a\in\fa, x\in \fn.$ Given $(N,g_1,X)$ is totally left-invariant quasi-Einstein. Theorem \ref{Lim} implies that $\cL_X g_1=0$ and Theorem \ref{solvable killing} says $X\in\fz(\fn)$ where $\fz(\fn)$ is one-dimensional. Because of $(iii)$, $X\in \fz(\fs)$ hence $\cL_Xg=0.$ Putting $[\fa,\fa]=0$ in (\ref{Ric(a,a)}) gives 
$$\Ric(a,a)=-\tr{S(\ad_a|_{\fn})^2}=\lambda g(a,a)$$
where the last equality is due to $(iv).$

To prove $\Ric(a,x)=0$:
Note that since $S$ is unimodular, the mean curvature vector $H=0.$ Since $[\fa,\fa]=0,$ the equation (\ref{Ric(a,x)}) implies that all we need to prove is $(\ad_a|_{\fn})^t \ad_x|_{\fn}=0.$ Let $\fn=\fn_1\oplus...\oplus \fn_r$ be an orthogonal decomposition such that $\fn=\fn_1\oplus [\fn,\fn]$, and $ [\fn,\fn]=\fn_2\oplus  [[\fn,\fn],\fn]$ so on. We claim that 
\begin{equation}\label{eqn A}
 \ad_a^t(\fn_i)\subset \fn_i \hspace{3em}\forall i.   
\end{equation}
Let $D\in Der(\fs),$ then $D([\fn,\fn])\subset [\fn,\fn],$ $D([\fn,[\fn,\fn]])\subset [\fn,[\fn,\fn]]$ and so on. In general,
\begin{equation*}
D(\fn_i\oplus...\oplus\fn_r)\subset \fn_i\oplus...\oplus\fn_r \hspace{3em}\forall i.
\end{equation*}
Let $u\in \fn_i$ and $v\in \fn_{i+1}\oplus ...\oplus \fn_{r}.$ Then 
\begin{equation*}
  g(\ad_a^t(u),v)=g(u,\ad_a(v))=0  
\end{equation*} since each $\fn_i$'s are orthogonal to each other. This proves our claim. Also if $x\in \fn,$ then 
\begin{equation}\label{eqn B}
 \ad_x(\fn_i)\subset [\fn,\fn_i]=\fn_{i+1}\oplus ...\oplus \fn_{r}.   
\end{equation} 
Then equation (\ref{eqn A}) and (\ref{eqn B}) together implies that 
\begin{equation*}
    \ad_a^t \ad_x(\fn_i)\subset \fn_{i+1}\oplus ...\oplus \fn_{r}.
\end{equation*}
Hence $\tr(\ad_a^t \ad_x)=\displaystyle\sum_{i} g(\ad_a^t \ad_x(\fn_i),\fn_i)=0.$ This proves that $\Ric{(a,x)}=0.$
To prove $\Ric{(x,x)}=g((\lambda I+F)(x),x),$ putting $[\fa,\fa]=0$ and Lemma \ref{Lauret lemma} in Ricci formula (\ref{Ric(x,x)}) gives
\begin{equation*}
    \Ric(x,x)=-\frac{1}{2}\sum g([x,x_i],x_j)^2+\frac{1}{4}g([x_i,x_j],x)^2 \newline
\end{equation*}
which is exactly $\Ric|_{\fn}.$ So $(i)$ implies that
\begin{equation*}
    \Ric{(x,x)}=\Ric|_{\fn}(x,x)=g_1((\lambda I+ F)(x),x)=g((\lambda I+ F)(x),x).
\end{equation*}
$(\Longrightarrow)$
To prove $(ii),$ since $\ad_a$ is normal, Lemma \ref{Lauret lemma} implies that
\begin{equation*}
    \ad_{a_{i}}^t[a_i,a_j]=[\ad_{a_{i}}^t(a_i),a_j]+[a_i,\ad_{a_{i}}^t(a_j)]\hspace{2em}\forall a_i,a_j\in \fa .
\end{equation*}
But Remark \ref{rmk H} implies that the right hand side is zero. So
\begin{equation*}
   0= g(\ad_{a_i}^t[a_i,a_j],a_j)=g([a_i,a_j],[a_i,a_j])
\end{equation*}
which gives that $[a_i,a_j]=0$, hence $(ii)$ is proved.

To prove $(i),$ consider (\ref{Ric(x,x)}). The first term vanishes due to $(ii)$ and second term vanishes since $\ad_a$ is normal. As a result, if $R_1$ denotes the Ricci operator on $\fn,$
\begin{equation*}
  R_1=r=\lambda I+ F
\end{equation*}
on $\fn,$ which proves $(i).$ Then Theorem \ref{nilpotent main theorem} implies that $X\in \fz(\fn)$ and $\fz(\fn)$ is one dimensional. But $X\in \fz(\fs)$ and $\fz(\fs)$ is also one-dimensional by Theorem \ref{solvable killing}. This proves $(iii).$

Finally from (\ref{Ric(a,a)}), we get $\lambda g(a,a)=-\tr{S(\ad_a|_{\fn})^2}$ which proves $(iv).$
\end{proof}
As an immediate corollary of the structure theorem, we obtain the following.  
\begin{cor}
Let $G$ be a non-flat unimodular solvable Lie group such that $\operatorname{ad}_a$ is normal for all $a \in \mathfrak{a}$. If $G$ admits a totally left-invariant quasi-Einstein metric, then its nilradical is isomorphic to a Heisenberg Lie algebra.  
\end{cor}

\subsection{The classification of totally left-invariant quasi-Einstein unimodular solvable Lie groups}

Now that we have a structure theorem, we classify the totally left-invariant quasi-Einstein unimodular solvable Lie groups such that $\ad_a$ is normal. As proved in \cite{Lim2020locally}, the only non-Abelian quasi-Einstein solvable Lie group in dimension three is the nilpotent Heisenberg group, $H_3.$ For higher dimensions, the results readily follow from Theorem \ref{main solvable thm}, as it suffices to identify the normal derivations of the Heisenberg Lie algebra.

\begin{rmk}\label{symmetric}
From \cite[Proposition 2.5]{Heber98}, a standard solvable Lie algebra with $\ad_a$ normal derivation for all $a\in \fa$  is isometric to the one obtained by just changing the Lie bracket into
\begin{equation*}
     [a,x]' =S(\ad_a)x, \,[x,y]' =[x,y], \forall a\in \fa, x,y \in \fn,
\end{equation*}
 and keeping the same metric $g.$ 
\end{rmk}
The above remark makes things easy because it implies that any totally left-invariant quasi-Einstein unimodular solvable Lie group with normal $\ad_a$ is isometric to another such Lie group that has $\ad_a$ symmetric for all $a\in \fa$.

The following theorem is the classification of operators of a totally left-invariant quasi-Einstein unimodular solvable Lie group that has normal $\ad_a$. This is achieved using a method similar to the one used by C. Will in \cite{cynthiaWill} to identify low-dimensional solvsolitons.
\begin{thm}\label{ada}
   Let $S$ be a non-flat unimodular solvable Lie group with $\ad_a$ normal for all $a\in \fa$. Then up to isometry, $S$ admits a totally left-invariant quasi-Einstein metric $g$ if and only if its Lie algebra $\fs= \fh_n \oplus _{\psi} \bR^k$ for some $0\leq k\leq s$ where $s=\frac{1}{2}(n-1),$ $\psi: \bR^k\rightarrow Der(\fh_n) $ is given by $a\mapsto \ad_a|_{\fh_n},$

 \[ \ad_a|_{\fh_n}= \begin{bmatrix}
   t_{1} &  &  & & &\\ 
   & -t_{1} &  & & &\\ 
   &  &  \ddots & & &\\ 
   &  &   & t_{s} & &\\
    &  &   & & -t_{s} & \\
     &  &   & &  & 0\\
 \end{bmatrix} \]
 with respect to the orthonormal basis $\{x_i,y_i,z|\, [x_i,y_j]=\delta_{ij}cz,\, 1\leq i,j\leq s\}$ of $\fh_n$ and $t_i\in \bR$ for all $1\leq i\leq s.$ 
\end{thm}
\begin{proof}
$(\Longleftarrow)$ Suppose $\fs= \fh_n \oplus _{\psi} \bR^k$ with the given adjoint action. Choose a left-invariant metric $g$ of $S$ such that $\displaystyle\sum^s_{i=1}t^2_i= \frac{c^2}{4}g(a,a)$ where $c\in\bR,[x_i,y_j]=\delta_{ij}cz$. To prove that $S$ is quasi-Einstein we have to verify Theorem \ref{main solvable thm}$i)-iv).$ Theorem \ref{heisenberg} implies that $i)$ is true.

To prove $ii),$ let $D=\ad_a^t.$ Since $\fa$ is Abelian, for any $a,a'\in \fa,$ we get $D[a,a']=0,$ and $D(a)=D(a')=0$ due to Remark \ref{rmk H}. Now,
\begin{equation*}
  D([a',x])=   \ad_a^t \ad_{a'} (x)=\ad_{a'} \ad_a^t (x)=[a',D(x)],
\end{equation*}
where the second equality is because $\ad_a$ is diagonal for all $a\in\fa.$ Using that $D(a')=0$ in the above equation, we get
\begin{equation*}
 D([a',x])=[D(a'),x]+[a',D(x)].
\end{equation*}
 So it is left to prove that $\ad_a^t|_{\fh_n}\in Der(\fh_{n}).$ Since $[x_i,y_j]=\delta_{ij}cz,$ from the given matrix $\ad_a|_{\fh_n}$ we get
\begin{equation*}
    [D(x_i),y_i]+[x_i,D(y_i)]=t_i[x_i,y_i]-t_i[x_i,y_i]=0
\end{equation*}
and $D([x_i,y_i])=D(z)=0$ for every $1\leq i\leq s.$ So $D$ satisfies the property of derivation for all the Lie brackets in $\fs$, hence it is a derivation of $\fs$. Proof of $iii)$ is immediate. To prove $iv),$ we choose an orthonormal basis $x_i,y_i,z$ for $1\leq i\leq s$ such that for any given $a\in \fa,$
\begin{equation*}
g(a,a)=\frac{4}{c^2}\displaystyle\sum^s_{i=1}t^2_i,
\end{equation*}
where $t_i=g(\ad_a(x_i),x_i)$ and $[x_i,y_j]=\delta_{ij}cz.$ But $a$ cannot be zero as it is in the orthogonal complement of $[\fa,\fa],$ hence $g(a,a)$ can't be zero. So there exists at least one $i,$ $1\leq i\leq s,$ such that $t_i\neq 0.$ Hence $[a,x_i]\neq 0$ and hence $a\notin \fz(\fs).$ So $\fz(\fs)=\fz(\fh_n)$ and $iv)$ is proved. Finally, $v)$ is true because according to the basis of $\fh_n$ we just chose, $\lambda=\Ric{(x_i,x_i)}=-\frac{1}{2}c^2$, which gives that
\begin{equation*}
    -\frac{1}{\lambda}\tr{S(\ad_a)^2}=\frac{4}{c^2}\sum^s_{i=1}t^2_i=g(a,a).
\end{equation*}

To prove the other direction, we will first prove that $\ad_a$ is the given diagonal matrix. Let the orthonormal basis of $\fh_n$ be $\{x_i,y_i,z| 1\leq i\leq s\}$ where $[x_i,y_j]=\delta_{ij}cz, c\in \bR\setminus\{0\}$ and the columns of $D=\ad_a|_{\fh_n}$ are ordered as $x_1,y_1,...,x_s,y_s, z$ from left to right.
\begin{equation*}
    g(D(x_i),x_i)=t_i,g(D(y_i),y_i)=s_i \,\,\,\forall\, 1\leq i\leq s,
\end{equation*}
for some $t_i,s_i\in \bR.$ Then
\begin{equation*}
    0=D(z)=[D(x_i),y_i]+[x_i,D(y_i)] =(t_i+s_i)z.
\end{equation*}
So $s_i=-t_i$ for all $1\leq i\leq s$ and the last column of $D$ is zero. By Remark \ref{symmetric}, $D$ is symmetric and hence the last row of $D$ is zero as well. Since $\bR^k$ is Abelian, the Jacobi identity implies that for any $a_i,a_j\in\bR^k,$ 
\begin{equation}\label{Abelian}
    \ad_{a_i}\circ \ad_{a_j}= \ad_{a_j}\circ \ad_{a_i}
\end{equation}
on $\fn.$ This implies that $\mathfrak{A}=\{\ad_a|_{\fh_n}: a\in \fa\}$ is an Abelian sub-algebra of the space of symmetric derivations of $\fh_n.$ So we can choose an orthonormal basis $\{x'_i,y'_i,z: [x'_i,y'_i]=z, 1\leq i\leq s \}$ of $\fh_n$ such that the elements of $\mathfrak{A}$ are all diagonal matrices as in \cite{cynthiaWill}. So $\ad_a|_{\fh_n}$ is diagonal with the diagonal entries $(t'_1,-t'_1,...,t'_s,-t'_s,0)$ with respect to the new basis.

To prove that $\fa$ in $\fs= \fh_n \oplus _{\psi} \fa$ is  $\bR^k$ for some $0\leq k\leq s,$ all we need to prove is $\dim(\fa)=k$ where $0\leq k\leq s$ as $\fa$ is Abelian by Theorem \ref{main solvable thm}. So we will prove that $\psi$ is an injection. Suppose $\psi(a)=\psi(b)$ for some basis vector $a,b$ of $\bR^k.$ Since $\psi$ is a linear map, $\psi(a-b)=0,$ hence $\ad_{a-b} =0$ on $\fs.$ But $\fs$ has one-dimensional center, hence $a-b \in \fz(\fs),$ so $a-b\in \fa \cap \fz(\fs).$ But $iv)$ implies that $\fz(\fs)=\fz(\fh_n)\subset \fh_n.$ Hence $a-b=0$ because $\fa\cap \fh_n =0.$ so $\psi$ is an injection, hence the dimension of $\fa$ is at most the dimension of the range
\begin{equation*}
 \{\ad_a|_{\fh_n}: a\in \fa\}\cong \{ (t_1,...,t_s): t_i\in \bR \hspace{1em}\forall \hspace{1em} 1\leq i\leq s
\}  
\end{equation*} which is $s.$
\end{proof}
\begin{rmk}
The numbers $t_i$ appearing in the above theorem are linear functionals 
$t_i \colon \mathbb{R}^k \to \mathbb{R}$, such that
\[
\mathrm{ad}_a\big|_{\mathfrak{h}_n} = 
\mathrm{diag}\!\left(t_1(a),\, -t_1(a),\, \ldots,\, t_s(a),\, -t_s(a),\, 0\right)
\]
for all $a \in \mathbb{R}^k, 1\leq  i\leq s$.
\end{rmk}
\begin{cor}
 Let $S$ be a non-flat unimodular solvable Lie group with $\ad_a$ normal for all $a\in\fa$. Then $S$ is totally left-invariant quasi-Einstein if and only if $S= \bR^k\ltimes_{\phi}H_n $ for some $0\leq k\leq \frac{1}{2}(n-1)$ where $H_n $ is the Heisenberg Lie group and $\phi: \bR^k\longrightarrow \Aut(H_n)$ such that differential of $(d\phi)_e$ is $\psi,$ where $\psi$ is the map given in Theorem \ref{ada}.
\end{cor}
\begin{proof}
The definition of the map $\phi$ at the Lie group level from the map $\psi$ at the Lie algebra level follows directly from \cite[Prop.~1.101]{MR1920389}.
\end{proof}
Theorem \ref{ada} in fact proves that there exist examples of non-flat unimodular solvable Lie group with totally left-invariant quasi-Einstein metric in every dimension greater than $2.$ In Table \ref{table2}, we describe the low dimensional examples.

\begin{table}[ht]
\caption{Classification of non-flat unimodular solvable Lie groups with normal $\ad_a$ and totally left-invariant quasi-Einstein metric of dimension $\leq 7$ up to isometry} 
\centering
\renewcommand{\arraystretch}{1.2} 
\begin{tabular}{c c c c c}
\hline\hline 
$dim(\fs)$ & $\fs$ & $\ad_a|_{\fh_n}$ & \textit{Constraints}  \\ 
\hline \vspace{1em}

3 & $\fh_3$ & -- & --  \\\vspace{1em}
4 & $\mathbb{R} a\oplus\fh_3$ & 
$\diag(\alpha, -\alpha,0)$ & $g(a,a)=(\tfrac{2\alpha}{c})^2$ \\ \vspace{1em}
5 & $\fh_5$ & -- & --  \\ \vspace{1em}
6 & $\mathbb{R} a\oplus\fh_5$ & 
$\diag(\alpha,\beta,-\alpha,-\beta,0)$ & $g(a,a)=4(\tfrac{\alpha^2+\beta^2}{c^2})$ \\ \vspace{1em}

7 & $\fh_7$ & -- & -- \\ 
7 & $\mathbb{R} a_1\oplus \mathbb{R} a_2 \oplus \fh_5$ & 
$\ad_{a_1}=\diag(\alpha,0,-\alpha,0,0),$

& $g(a_1,a_1)=(\tfrac{2\alpha}{c})^2, \;g(a_1,a_2)=0,$ \\ \vspace{2em}
 &  & $\ad_{a_2}=\diag(0,\beta, 0,-\beta,0),$ &  $g(a_2,a_2)=(\tfrac{2\beta}{c})^2$  \\
\hline
\end{tabular}
\label{table2}
\end{table}

 We do not yet understand the behavior in cases where $\ad_a$ is not normal. Even when $\dim \mathfrak{a} = 1$, it remains an interesting open question whether there exist solvable Lie groups with non-normal $\ad_a$ that admit quasi-Einstein metrics.

\begin{rmk}
A Riemannian solvmanifold is a homogeneous space of a connected solvable Lie group. Any simply connected solvmanifold is isometric to a solvable Lie group $S$ equipped with a left-invariant metric (see \cite{MR936815}).  
Since our structure theorem is stated up to isometry, they can also be interpreted as structure theorem for simply connected solvmanifolds that admit a totally left-invariant quasi-Einstein metric with $\ad_a$ normal, where the corresponding solvable Lie group is unimodular.
\end{rmk}
 
We conclude by discussing a case that remains unexplored in our analysis.
\subsection{The case when X is not left-invariant } \label{subsec B} Finding quasi-Einstein metrics on unimodular solvable Lie groups becomes significantly more challenging when the vector field $X$ is not assumed to be left-invariant. Even in the simplest case of the three-dimensional Heisenberg group, it remains unknown whether such solutions exist when $X$ lacks left-invariance. However, in the event that such solutions do exist, certain properties of these solutions are known to us. 
\begin{prop}
 Suppose  $G$ is a Lie group that satisfies the equation $\mRic{}=\lambda g$ where $X$ is not left-invariant, $g$ is left-invariant with both positive and negative directions of Ricci curvature, and the Lie algebra of $G$ admits an orthonormal basis $e_1,...,e_n$ such that $g([e_i,e_j],e_i)=0$ for all $1\leq i\leq n.$ Then $m\lambda < 0. $   
\end{prop}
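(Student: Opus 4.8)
The plan is to exploit the trace of the quasi-Einstein equation together with the hypothesis that the chosen orthonormal basis $e_1,\dots,e_n$ satisfies $g([e_i,e_j],e_i)=0$. The key observation is that this basis condition forces the unimodularity-type Ricci formula to take a particularly clean form: with this frame, the ``positive'' and ``negative'' contributions to $\Ric(e_i,e_i)$ separate as in Remark~\ref{formula} (and its solvable analogue), so that summing $\Ric(e_i,e_i)$ over $i$ produces a scalar curvature in which the two structure-constant sums combine with a definite sign. Concretely, I would first take the trace of $\mRic=\lambda g$, obtaining
\begin{equation*}
  \operatorname{scal} + \frac{1}{2}\operatorname{tr}(\cL_X g) - \frac{1}{m}\abs{X}^2 = n\lambda,
\end{equation*}
and then compute $\operatorname{scal}$ directly from the Ricci formula in the given frame.

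The second step is to control the sign of $\operatorname{scal}$. Using the basis condition $g([e_i,e_j],e_i)=0$, the diagonal Ricci terms lose their mixed contributions, and summing them I expect $\operatorname{scal}$ to reduce to a manifestly nonpositive expression (a negative multiple of $\sum g([e_i,e_j],e_k)^2$), with strict negativity guaranteed by the hypothesis that $g$ has a direction of strictly negative Ricci curvature — equivalently, the bracket is nontrivial, so $\operatorname{scal}<0$. Simultaneously, the hypothesis that there is a direction of \emph{positive} Ricci curvature prevents the degenerate flat case. The remaining input is the behavior of the $\cL_X g$ term: since $X$ need not be Killing (this is precisely the non-left-invariant regime), $\operatorname{tr}(\cL_X g)$ need not vanish, so I would either integrate this term away against the invariant measure or argue that, on the relevant closed/homogeneous quotient, its total contribution is controlled. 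This is where I expect the analysis to require the setup from Section~\ref{sec 2} and Theorem~\ref{Eric}.

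Combining these, the traced equation reads $\operatorname{scal} - \tfrac{1}{m}\abs{X}^2 = n\lambda$ (after disposing of the $\cL_X g$ term), and since $\operatorname{scal}<0$ and $\tfrac{1}{m}\abs{X}^2$ has the sign of $m$, I would conclude the sign of $m\lambda$ by multiplying through by $m$: the relation $m\operatorname{scal} - \abs{X}^2 = nm\lambda$ gives $nm\lambda = m\operatorname{scal} - \abs{X}^2$, and I would pair the negativity of $\operatorname{scal}$ against $m$ carefully so that $m\operatorname{scal}\le 0$ and $-\abs{X}^2\le 0$ force $m\lambda<0$ whenever strict negativity of scalar curvature holds.

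\textbf{Main obstacle.} The delicate point is the $\cL_X g$ trace term, which does not vanish when $X$ is not left-invariant; controlling its sign or total contribution — rather than the purely algebraic Ricci computation, which the basis hypothesis makes routine — is where the real work lies, and I would expect to lean on an integration/compactness argument or on the eigenvalue structure of the Ricci tensor to pin down the sign of $m\lambda$.
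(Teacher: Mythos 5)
Your strategy has a genuine gap, and in fact cannot work in this setting, for three concrete reasons. First, the integration/compactness step that you defer to as the ``main obstacle'' is simply unavailable: this proposition lives precisely in the non-compact, no-lattice regime (it is the content of Section~\ref{subsec B}), $G$ is a bare Lie group, Theorem~\ref{Eric} requires a closed manifold, and there is no finite invariant measure against which to integrate $\operatorname{tr}(\cL_X g)=2\operatorname{div}X$. Second, your claim that the frame condition forces $\operatorname{scal}<0$ is false: $g([e_i,e_j],e_i)=0$ makes $G$ unimodular and kills the mixed terms, but the scalar curvature of a left-invariant metric still contains the Killing-form contribution $-\tfrac{1}{2}\sum_k \operatorname{tr}\bigl(ad_{e_k}^2\bigr)$, which vanishes for nilpotent algebras (this is why Remark~\ref{formula} has the clean form you quote) but not in general; for example, $SU(2)$ with a Milnor frame satisfies the basis hypothesis, can have Ricci curvatures of both signs, and has positive scalar curvature. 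Third, even granting both steps, the traced identity $m\operatorname{scal}-\abs{X}^2=nm\lambda$ settles the sign of $m\lambda$ only when $m>0$; when $m<0$ the two terms on the left have opposite signs and nothing follows. The proposition concerns an arbitrary nonzero integer $m$, and the hypothesis that Ricci has a \emph{positive} direction is there precisely to handle $m<0$ --- a trace argument discards exactly that information.

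The paper's proof is pointwise and never mentions scalar curvature or integration. Write $X=\sum_i f_i e_i$ with the $f_i$ smooth functions, not all constant. The frame condition annihilates every algebraic term in the diagonal Lie-derivative entries, leaving $(\cL_X g)(e_j,e_j)=2D(f_j)$, a pure derivative of the coefficient function. The $(j,j)$-component of $\mRic{}=\lambda g$ therefore reads
\begin{equation*}
2D(f_j)-\frac{1}{m}f_j^2=\lambda-\Ric{(e_j,e_j)},
\end{equation*}
whose right-hand side is a constant because $g$, hence $\Ric{}$, is left-invariant. Lemma 2.3 of \cite{Lim2020locally} (an ODE-type sign lemma for non-constant solutions of such equations) then gives $m\bigl(\lambda-\Ric{(e_j,e_j)}\bigr)<0$ for every $j$. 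Evaluating at a direction with $\Ric{}<0$ handles $m>0$ (giving $\lambda<0$), and evaluating at a direction with $\Ric{}>0$ handles $m<0$ (giving $\lambda>0$); in both cases $m\lambda<0$. If you wish to repair your write-up, this pointwise use of the frame condition together with Lim's lemma is the missing ingredient; no global or integral argument is available, and none is needed.
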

\begin{proof}
Suppose $X=\sum _i f_i e_i$ where $e_i$'s form a basis as given above. Since $X$ is not left-invariant, we can assume that $f_i$ is non-constant smooth function on $M$ for some $1\leq i\leq n.$ Then

\begin{align*}
    (\cL_{X}g)(u,v) &= D_{X}g(u,v) - g(\cL_X u, v) - g(u, \cL_X v) \\
    &= -g([X,u],v) - g(u,[X,v]) \\
    &= -\sum_{i} g([f_i e_i, u], v) - g(u, [f_i e_i, v]) \\
    &= -\sum_{i} f_i g([e_i,u], v) + D_u(f_i) g(e_i, v) 
       - f_i g(u, [e_i, v]) + D_v(f_i) g(e_i, u).
\end{align*}
Suppose $u=v=e_j,$ where $e_j$ is some basis vector,
\begin{equation*}
     (L_{X}g)(e_j,e_j) 
    = -\sum_{i} 2 f_i g([e_i,e_j], e_j) + 2D_u(f_j) g(e_j, e_j) = \sum_{i} 2D_u(f_j).   
\end{equation*}
Then, $\mRic{}=\lambda g$ implies that
 \begin{equation*}
      \sum_{i} 2D_u(f_j)-\frac{1}{m}f^2_j = -\Ric(e_j,e_j) + \lambda,
 \end{equation*}
for every $1\leq j\leq n.$ Then Lemma 2.3 of \cite{Lim2020locally} implies that $m(-\Ric(e_j,e_j) + \lambda)<0.$ So, $m>0$ implies that $\lambda < \Ric(e_j,e_j),$ hence $\lambda <0$ since $G$ has some vectors that are Ricci negative. Similarly,  $m<0$ will imply that $\lambda >0.$ 
\end{proof}
Next, we prove a lemma that will be used in a corollary of this theorem.
\begin{lemma}\label{lemma A1}
Any nilpotent Lie group $(N,g)$ with Lie algebra $\fn$ admits an orthonormal basis $\{e_1,...,e_n\}$ such that $g([e_i,e_j],e_i)=0$ for any $1\leq i,j\leq n.$
\end{lemma}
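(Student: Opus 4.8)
The plan is to produce the basis by refining the ascending central series of $\fn$ into a complete flag that is well-behaved with respect to the bracket, and then applying Gram--Schmidt. First I would recast the condition in a more convenient form. Since $g([e_i,e_j],e_i)=-g([e_j,e_i],e_i)$, the requirement ``$g([e_i,e_j],e_i)=0$ for all $i,j$'' is equivalent, after relabeling the indices, to ``$g([e_i,e_j],e_j)=0$ for all $i,j$.'' So it suffices to find an orthonormal basis for which the bracket of any two basis vectors is orthogonal to the \emph{second} of them.

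Next, using nilpotency, I would build a complete flag $\{0\}=V_0\subset V_1\subset\cdots\subset V_n=\fn$ with $\dim V_k=k$ and the crucial property $[\fn,V_k]\subseteq V_{k-1}$ for every $k$. This comes from the ascending central series $\{0\}=\fz_0\subset\fz_1\subset\cdots\subset\fz_r=\fn$, which for nilpotent $\fn$ terminates at $\fn$ and satisfies $[\fn,\fz_j]\subseteq\fz_{j-1}$. The key observation is that one may refine this series arbitrarily into a complete flag without losing the property: if $\fz_{j-1}\subseteq U\subseteq\fz_j$ is any intermediate subspace, then $[\fn,U]\subseteq[\fn,\fz_j]\subseteq\fz_{j-1}\subseteq U$, so every term of any refinement is automatically an ideal and inherits $[\fn,V_k]\subseteq V_{k-1}$.

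Then I would set $e_k$ to be a unit vector spanning $V_k\cap V_{k-1}^\perp$, so that $\{e_1,\dots,e_n\}$ is orthonormal and $V_k=\operatorname{span}(e_1,\dots,e_k)$. For any indices $i,k$ we have $[e_i,e_k]\in[\fn,V_k]\subseteq V_{k-1}$, and since $e_k\perp V_{k-1}$ this yields $g([e_i,e_k],e_k)=0$. By the reformulation from the first step, this is precisely the desired conclusion $g([e_i,e_j],e_i)=0$.

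The only point requiring care---rather than a genuine obstacle---is the index bookkeeping: the flag naturally annihilates the second slot, and one must invoke antisymmetry of the bracket to transfer this to the first slot as the lemma is stated. The substantive input is nilpotency, used solely to guarantee a flag with $[\fn,V_k]\subseteq V_{k-1}$; for a general Lie algebra no such control on the bracket need exist. As an alternative route, the same statement follows by induction on $\dim\fn$: choose a unit central vector $z$, identify $\fn/\bR z$ with $z^\perp$ as an inner-product Lie algebra, apply the inductive hypothesis there, and lift the resulting basis together with $z$. The central direction contributes nothing to $g([e_i,e_j],e_i)$, since $z\perp e_i$ for each $i$ and $ad_z=0$, so the only cases to check reduce to the quotient computation.
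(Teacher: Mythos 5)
Your proof is correct, but it travels a different filtration than the paper's. The paper works with the \emph{descending} central series: it sets $\fn_1=[\fn,\fn]^{\perp}$ inside $\fn$, $\fn_2=[\fn,[\fn,\fn]]^{\perp}$ inside $[\fn,\fn]$, and so on, obtaining an orthogonal block decomposition $\fn=\fn_1\oplus\cdots\oplus\fn_r$ in which the bracket of any two basis vectors coming from blocks $i\leq j$ lies in $\fn_{j+1}\oplus\cdots\oplus\fn_r$, hence is orthogonal to \emph{both} factors at once; no relabeling step is needed there. You instead use the \emph{ascending} central series, refine it to a complete flag $V_0\subset V_1\subset\cdots\subset V_n$ satisfying $[\fn,V_k]\subseteq V_{k-1}$ (your observation that any refinement of the series automatically inherits this property is the crux, and it is right), and then take the orthonormal basis adapted to the flag. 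Both arguments are of the form ``filtration down which brackets strictly drop, plus orthogonality,'' and nilpotency enters both in exactly the same way; your version actually yields the slightly stronger conclusion $[e_i,e_j]\in\operatorname{span}(e_1,\dots,e_{\min(i,j)-1})$, i.e., every $ad_{e_i}$ is strictly triangular in this basis, while the paper's gives the block-triangular analogue. Your antisymmetry-and-relabeling reduction of $g([e_i,e_j],e_i)=0$ to $g([e_i,e_j],e_j)=0$ is valid, though avoidable: since $[e_j,e_i]\in[\fn,V_i]\subseteq V_{i-1}$, you can read off $g([e_i,e_j],e_i)=0$ directly from the flag. The inductive alternative you sketch also works: quotienting by a central unit vector $z$ and transporting the bracket to $z^{\perp}$ only changes brackets by multiples of $z$, which are orthogonal to every basis vector of $z^{\perp}$, so the inductive hypothesis lifts without loss.
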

\begin{proof}
 Suppose $\fn_1=[\fn,\fn]^{\perp}$ inside $\fn,$ $\fn_2= [\fn,[\fn,\fn]]^{\perp}$ inside $[\fn,\fn]$ and so on. Since $\fn$ is nilpotent, there exists a finite real number $r$ such that $\fn=\fn_1\oplus ...\oplus \fn_r$ is an orthogonal decomposition with $[\fn,\fn]=\fn_2\oplus ...\oplus \fn_r, [\fn,[\fn,\fn]]=\fn_3\oplus ...\oplus \fn_r$ and so on. For $1\leq i\leq r,$ let $\{e_{i1},e_{i2},...,e_{in_{{i}}}\}$ be the orthonormal basis that span each $\fn_i$ with dimension $n_{i}.$ Then $g([e_{ik},e_{il}],e_{ik})=0$ since $[e_{ik},e_{il}]\in \fn_{i+1}\oplus...\oplus \fn_r$ by construction for any $1\leq i\leq r$ and $1\leq k,l\leq n_{i}.$ Also, suppose $i<j,$ then $[e_{ik},e_{jl}]\in \fn_{j+1}\oplus...\oplus \fn_r$ by construction. So $g([e_{ik},e_{jl}],e_{ik})=g([e_{ik},e_{jl}],e_{jl})=0.$ Hence $\{e_{i1},e_{i2},...,e_{in_{{i}}}, 1\leq i\leq r\}$ is an orthonormal basis of $\fn$ that satisfies the desired property.
\end{proof}
\begin{cor}
Suppose a nilpotent Lie group satisfies the equation $\mRic{}=\lambda g$ where $X$ is not left-invariant and $g$ is left-invariant, then $m\lambda < 0. $    
\end{cor}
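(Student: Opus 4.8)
The plan is to obtain this corollary as an immediate specialization of the preceding Proposition. A nilpotent Lie group $N$ with left-invariant metric $g$ is in particular a Lie group with left-invariant metric, so it suffices to verify that the three structural hypotheses of the Proposition — the existence of a frame $e_1,\dots,e_n$ with $g([e_i,e_j],e_i)=0$, the presence of both strictly positive and strictly negative Ricci directions, and the standing assumptions that $\mRic{}=\lambda g$ with $X$ not left-invariant and $g$ left-invariant — hold automatically in the nilpotent setting, and then quote its conclusion $m\lambda<0$.

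First I would dispatch the frame hypothesis: Lemma \ref{lemma A1} guarantees that every nilpotent Lie algebra $\fn$ with left-invariant metric admits an orthonormal basis $e_1,\dots,e_n$ satisfying $g([e_i,e_j],e_i)=0$ for all $1\leq i,j\leq n$, which is exactly the frame condition required by the Proposition. Next I would supply the hypothesis that $g$ has Ricci curvature of both strict signs. This is precisely Milnor's theorem recorded in Remark \ref{formula}: on any non-Abelian nilpotent Lie group, every left-invariant metric admits a direction of strictly negative Ricci curvature (lying in $[\fn,\fn]^{\perp}$) and a direction of strictly positive Ricci curvature (lying in $\fz(\fn)\cap[\fn,\fn]$). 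With all three hypotheses verified, the Proposition yields $m\lambda<0$ directly, with no further computation.

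The main obstacle is the Abelian case. Here $N$ is flat, so $\Ric{}\equiv 0$ and Milnor's dichotomy does not apply: there are no strictly signed Ricci directions, and the hypothesis feeding the Proposition fails. I would address this by treating it separately, restricting to the non-Abelian (hence non-flat) case where the argument above applies verbatim; this is the natural reading, since the frame condition of Lemma \ref{lemma A1} already holds trivially for Abelian $N$, and the only genuinely missing ingredient in the flat case is precisely the oppositely-signed Ricci curvature that Milnor's theorem provides exactly when $N$ is non-Abelian. Thus the non-Abelian restriction is the essential hypothesis needed to invoke the Proposition, and I expect clarifying this case to be the only subtle point in an otherwise short reduction.
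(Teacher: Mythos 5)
Your proposal is correct and takes essentially the same route as the paper: the paper's entire proof of this corollary is the single sentence invoking Lemma \ref{lemma A1} to produce a frame with $g([e_i,e_j],e_i)=0$, after which the preceding Proposition is quoted implicitly. You are in fact more careful than the paper, since you also verify the Proposition's other hypothesis --- Ricci curvature of both signs --- via the Milnor result recorded in Remark \ref{formula}, a step the paper's one-line proof silently omits.

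One remark on your treatment of the Abelian case: your restriction to non-Abelian groups is honest but not actually necessary, and the corollary as stated does cover Abelian groups (where non-left-invariant solutions exist, e.g.\ $X=\sqrt{-\lambda m}\,\tanh\bigl(\sqrt{-\lambda/m}\,x\bigr)\partial_x$ on flat $\bR$ with $m>0$, $\lambda<0$). When $N$ is Abelian one cannot quote the Proposition, since $\Ric{}\equiv 0$; but writing $X=\sum_j f_j e_j$ in a left-invariant orthonormal frame, the $(e_j,e_j)$-component of the quasi-Einstein equation reduces to the Riccati equation $D_{e_j}(f_j)-\frac{1}{m}f_j^2=\lambda$, and Lemma 2.3 of \cite{Lim2020locally} --- the same engine driving the Proposition's proof --- applied to a coefficient $f_j$ that is non-constant (such a $j$ exists because $X$ is not left-invariant) yields $m\lambda<0$ directly. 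The paper's proof leaves this case untreated as well, so your proposal matches it and flags the gap; filling it in as above would make the argument complete for the statement as literally written.
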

\begin{proof}
 From lemma \ref{lemma A1}, we have a basis for the Lie algebra such that $g([e_i,e_j],e_i)=0.$ 
 \end{proof}
When $\lambda$ is non-negative, regardless of whether $X$ becomes left-invariant, the following statements can be made.
\begin{prop}
Suppose $(M,g,X)$ admits an $m$-quasi-Einstein metric where $M$ is a homogeneous space. If $m>0,$ then:
\begin{enumerate}[a)]
    \item If $\lambda >0,$ then $M$ is compact.
    \item If $\lambda= 0$ then $M$ splits
isometrically as $N\times \bR^k$ where $N$ is a compact homogeneous space.
\end{enumerate}
\end{prop}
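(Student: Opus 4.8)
The plan is to prove the two statements separately: part (a) by a Bonnet--Myers type estimate adapted to the tensor $\mRic{}$, and part (b) by a splitting argument. Throughout I use that a homogeneous Riemannian manifold is complete. The starting point is the identity $\tfrac12(\cL_X g)(T,T)=g(\nabla_T X,T)$, valid along any unit-speed geodesic $\gamma$ with tangent $T$. Writing $h(t):=g(X_{\gamma(t)},T)$ and noting $g(\nabla_T X,T)=h'(t)$ (since $\nabla_T T=0$), the equation $\mRic=\lambda g$ evaluated on $(T,T)$ rearranges to
\[
\Ric(T,T)=\lambda-h'(t)+\tfrac1m h(t)^2 .
\]

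For part (a) I would run the second-variation argument. Fix a minimizing geodesic $\gamma:[0,\ell]\to M$, a parallel orthonormal frame $E_1,\dots,E_{n-1}$ normal to $T$, and a test function $\phi$ with $\phi(0)=\phi(\ell)=0$. Minimality gives $\sum_i I(\phi E_i,\phi E_i)\ge 0$, i.e. $\int_0^\ell[(n-1)(\phi')^2-\phi^2\Ric(T,T)]\,dt\ge 0$. Substituting the formula above, integrating the term $-\int\phi^2 h'$ by parts, and completing the square via
\[
2\phi\phi' h+\tfrac1m\phi^2h^2=\tfrac1m(\phi h+m\phi')^2-m(\phi')^2,
\]
I obtain, after discarding the nonnegative square term (here $m>0$ is essential),
\[
\lambda\int_0^\ell \phi^2\,dt\le (n+m-1)\int_0^\ell(\phi')^2\,dt .
\]
This is precisely the estimate one gets for $\Ric\ge\lambda$ in dimension $n+m$; choosing $\phi=\sin(\pi t/\ell)$ yields $\ell\le \pi\sqrt{(n+m-1)/\lambda}$. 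Hence the diameter is bounded, and $M$, being complete, is compact by Hopf--Rinow.

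For part (b), with $\lambda=0$ the estimate degenerates and gives no diameter bound; instead I use that the $m$-Bakry--\'Emery tensor vanishes, so in particular $\mRic{}\ge 0$. If $M$ is compact, take $k=0$. If $M$ is non-compact, then a complete non-compact homogeneous space contains a line, and I would invoke a splitting theorem for the $m$-Bakry--\'Emery tensor: a complete manifold with $\mRic{}\ge 0$ containing a line splits isometrically as $\bR\times M'$. The factor $M'$ inherits a homogeneous structure and remains $m$-quasi-Einstein with $\lambda=0$, so the argument iterates; after splitting off a maximal $\bR^k$, the remaining factor $N$ contains no line and therefore, being complete and homogeneous, is compact. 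This gives $M\cong N\times\bR^k$ isometrically.

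The main obstacle is part (b): the classical weighted splitting theorems are stated for a gradient field $X=\nabla\phi$, whereas here $X$ need not be a gradient. To close this gap I would carry out the Busemann-function and equality-case analysis directly, using homogeneity to ensure that $\Ric$ and the tensor $\cL_X g$ are invariant. Along each line the degenerate case of the estimate in (a) should force $\phi h+m\phi'\to 0$ and the normal Jacobi fields to be parallel, so that the line directions span a parallel flat distribution and the de Rham theorem produces the $\bR^k$ factor. Checking that this splitting respects the vector field $X$ and keeps each factor homogeneous and $m$-quasi-Einstein is the delicate technical point.
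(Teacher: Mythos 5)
Part (a) of your proposal is correct and complete: the identity $\tfrac12(\cL_Xg)(T,T)=h'(t)$ along a geodesic, the integration by parts, and the completing-the-square step $2\phi\phi' h+\tfrac1m\phi^2h^2=\tfrac1m(\phi h+m\phi')^2-m(\phi')^2$ are all right, and they give the diameter bound $\ell\le\pi\sqrt{(n+m-1)/\lambda}$, hence compactness by Hopf--Rinow. In effect you have reproved, from scratch, the Bonnet--Myers-type theorem for the $m$-Bakry--\'Emery tensor with a general (non-gradient) vector field that the paper simply cites (Limoncu's Theorem 1.1); the paper's proof of (a) is a one-line appeal to that reference, so your version is more self-contained but follows the same underlying mechanism.

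Part (b), however, has a genuine gap, and it is exactly the point you yourself flag as ``the delicate technical point.'' Your argument hinges on a splitting theorem for complete manifolds with $\mRic{}\ge 0$ containing a line, valid when $X$ is \emph{not} a gradient. You correctly observe that the classical weighted splitting theorems (Fang--Li--Zhang, etc.) assume $X=\nabla\phi$, but your proposed repair --- an ``equality-case'' analysis of the second-variation estimate forcing normal Jacobi fields to be parallel, followed by de Rham --- is not a proof: splitting theorems are not obtained from index-form equality cases but from Busemann-function analysis, and the standard Busemann/excess argument uses the self-adjointness of the drift Laplacian with respect to the weighted measure $e^{-\phi}\,dvol$, which has no analogue for non-gradient $X$. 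This is precisely why the non-gradient case required a separate treatment in the literature: the paper fills this step by citing Theorem 2 of Khuri--Woolgar--Wylie (Lett.\ Math.\ Phys.\ 109 (2019)), which proves the splitting $M=N\times\bR^k$ with $N$ containing no line for $\mRic{}\ge0$ with non-gradient $X$ and $m>0$, at one stroke rather than by iteration. Your iteration scheme also leaves unproved the claims that each factor $M'$ inherits a homogeneous, $m$-quasi-Einstein structure; the paper avoids this by splitting off all of $\bR^k$ at once and then invoking only the argument (as in Petersen's Theorem 7.3.16) that isometries of $M$ preserve the splitting because all lines lie in the $\bR^k$ slices, so $N$ is homogeneous without lines and hence compact --- the same final step you use, which is fine. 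So: (a) stands on its own; (b) needs either the citation to the non-gradient splitting theorem or a genuine proof of it, neither of which your sketch supplies.
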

\begin{proof}
 Suppose $m $ and $ \lambda$ are strictly positive. Then by \cite[Theorem 1.1 ]{Limoncu2010-nr}, we have that $M$ is compact.
 
 When $\lambda=0,$ by \cite[Theorem 2]{2019LMaPh.109..661K}, $M$ splits
isometrically as $N\times \bR^k$ where $N$ is a complete Riemannian manifold without a line. Then, applying the argument in the proof of Theorem 7.3.16 of \cite{petersen2016riemannian}, one can conclude that $N$ must be compact.
\end{proof}

\section*{Declarations}

\noindent\textit{\textbf{Funding:} No funding was received for conducting this research.}
\medskip

\bibliographystyle{amsalpha}
\bibliography{references1} 

\end{document}